\documentclass[12pt]{article}
\usepackage{geometry}
\usepackage{amsmath,amsfonts,amsthm,amssymb,amscd}

\usepackage{times}

\usepackage[utf8]{inputenc}
\usepackage{algorithm}
\usepackage{array}
\usepackage{algorithmicx}
\usepackage{algpseudocode}
\usepackage{graphicx}
\usepackage{letltxmacro}

\usepackage{framed}

\usepackage{color}
\usepackage[table,xcdraw,svgnames,dvipsnames]{xcolor}
\definecolor{light-salmon}{RGB}{255,140,120}
\usepackage[colorlinks]{hyperref}
\hypersetup{
	allbordercolors=.,
	citecolor=DarkGreen,
	linkcolor=DarkRed,
	urlcolor=NavyBlue
}

\graphicspath{{../DiamConstr/pics/}{./pics/}}

\usepackage{enumitem}

\numberwithin{equation}{section}
\theoremstyle{plain}
\newtheorem{thm}{Theorem}
\newtheorem{lemma}{Lemma}

\newtheorem{prop}[thm]{Proposition}
\newtheorem{deff}[thm]{Definition}
\theoremstyle{definition}
\newtheorem{rem}[thm]{Remark}
\newtheorem{conj}[thm]{Conjecture}

\newcommand{\Area}{\operatorname{Area}}

\newcommand{\diam}{\operatorname{diam}}
\renewcommand{\Bbb}{\mathbb}
\newcommand{\bo}[1]{{\bf #1}}

\newcommand{\Ver}{\operatorname{vert}}

\makeatletter
\DeclareFontFamily{U}{tipa}{}
\DeclareFontShape{U}{tipa}{m}{n}{<->tipa10}{}
\newcommand{\arc@char}{{\usefont{U}{tipa}{m}{n}\symbol{62}}}%

\newcommand{\arc}[1]{\mathpalette\arc@arc{#1}}

\newcommand{\arc@arc}[2]{%
	\sbox0{$\m@th#1#2$}%
	\vbox{
		\hbox{\resizebox{\wd0}{\height}{\arc@char}}
		\nointerlineskip
		\box0
	}%
}
\makeatother

\title{Isoperimetric problems related to extremal diameter graphs in 3D: theoretical and numerical aspects}

\author{Beniamin Bogosel\thanks{Faculty of Exact Sciences, Aurel Vlaicu
University of Arad, 2 Elena Dr\u agoi Street, Arad, Romania.\newline   
Email: \texttt{beniamin.bogosel@uav.ro}. Website: \url{https://beniamin-bogosel.github.io/}}}

\date{}

\begin{document}
	
\maketitle

\begin{abstract}
We study optimization problems for separable functionals of the Euclidean or
spherical lengths of dual edge pairs in finite extremal unit-diameter
configurations in three dimensions. For a fixed diameter graph, these
problems lead to nonconvex constrained optimization of the vertex
coordinates. We first prove that a convergent sequence of extremal
configurations retains an extremal geometric core after coincident points are
merged and vertices incident to at most one diameter are removed. A spherical
Crofton argument then gives a sharp lower bound for additive concave
functionals, attained by the regular tetrahedron. We also analyze the effect
of inserting or deleting dangling vertices. For the sum of products of
spherical dual-edge lengths, we obtain an exact supremal reformulation of the
three-dimensional Blaschke--Lebesgue area problem. The numerical study uses
all 10,644 available extremal configurations with at most 16 vertices. We
combine direct evaluation with gradient-based local optimization on each
fixed graph and reconstruct the intrinsic diameter graph and its dual pairs
after vertex collisions. For every supplied graph, the selected verified
endpoint contains a regular tetrahedron. These computations do not certify
the global maximum for any fixed graph, but they motivate structural
conjectures connecting tetrahedral containment with the
Blaschke--Lebesgue problem.
\end{abstract}

{\bf Keywords:} extremal sets of unit diameter, constant width, Meissner polyhedra,
dual edges, nonlinear optimization, global optimization

Mathematics Subject Classification: 52A10, 49Q10, 52A38, 90C26.

\section{Introduction}

Consider a finite set of $m \geq 4$ distinct points $X_m = \{x_1,x_2,\ldots,x_m\}$ in $\Bbb{R}^3$. We normalize its diameter to one. Since the points determine $m(m-1)/2$ segments, a natural question is: \emph{what is the largest possible number of diametral segments?} Equivalently, what is the maximum cardinality of
\begin{equation}\label{eq:diam-pairs} \mathcal D = \{(i,j) : 1\leq i<j \leq m,\ |x_i-x_j|=1\}?
	\end{equation}

This problem, called the \emph{V\'azsonyi problem}, was solved independently by Gr\"unbaum, Heppes, and Straszewicz; see \cite{disk-polygons} for the extension by Kupitz, Martini, and Perles and \cite{meissner_hynd} for a recent exposition. The answer is

\emph{The finite set $X_m$ containing $m$ points can have at most $2m-2$ diametral pairs.} 

An $m$-point set $X_m\subset\Bbb R^3$ with unit diameter and $2m-2$
diametral pairs is called an \bo{extremal set of unit diameter}, or simply an
\emph{extremal set}.  We denote by $B(X)$ the intersection of the unit balls
centered at the points of $X$. Extremal sets give rise to \emph{ball polyhedra}
and \emph{Meissner polyhedra}, defined below together with the properties used
in this work.

	\begin{framed}
	  \bo{Ball polyhedra}: if $X_m$ is extremal, then $B(X_m)$ is a polyhedral-like shape whose faces are regions of unit spheres and whose edges are circular arcs.  Their structure is described in \cite{disk-polygons,meissner_hynd}.  By analogy with two-dimensional Reuleaux polygons, they are also called Reuleaux polyhedra or Reuleaux polytopes \cite{bodies_of_constant_width}.  These shapes have the following properties:
	\begin{itemize}[noitemsep]
		\item The points $x_i$ in the extremal set $X_m$ are called \bo{essential vertices}. In \cite{meissner_hynd} a distinction is made between \bo{principal vertices} (endpoints for at least three diameters) or \bo{dangling vertices} (endpoints for exactly two diameters). 
		\item The \bo{face} opposite to $x_i$ in $B(X_m)$, denoted $F_i$, verifies $F_i = B(X_m) \cap \partial B(x_i)$ and is a region of a sphere, bounded by circular arcs. The face $F_i$ opposite to $x_i$ has angular points determined by $\{ x_j \in X_m : |x_i-x_j| =1\}$. Compared to classical polyhedra, it is possible to have a face of a ball polyhedron having only two vertices! See \cite{meissner_hynd} or the following sections of the article for examples. 
		Faces have the following involutive property: $x_j \in F_i \Longleftrightarrow x_i \in F_j$.
		\item Two adjacent faces $F_i, F_j$ in $B(X_m)$ meet along an \bo{edge} $(x_k,x_l)$. Since $F_i\cap F_j$ is an intersection of two unit spheres with distinct vertices, the corresponding edge is an arc $\arc{x_kx_l}$ of a circle with radius in $(0,1)$. 
		
		The relation $F_i\cap F_j = \arc{x_kx_l}$ implies $F_k\cap F_l = \arc{x_ix_j}$. The two exposed edges obtained in this way are called \bo{dual edges}; every exposed edge has a unique exposed dual edge \cite[Theorem~8.1]{disk-polygons}.  Their four cross distances are diameters:
		\[ |x_i-x_k|=|x_i-x_l|=|x_j-x_k|=|x_j-x_l|=1.\]
		The converse requires the cyclic-order and exposure conditions on the
		sphere-intersection circle.  Four cross distances alone only identify
		prospective common diameter-neighbours and do not guarantee that either
		joining arc is exposed.
		
		\item Ball polyhedra satisfy Euler's formula $F+V=E+2$. For $B(X_m)$, one has $F=V=m$ and $E=2m-2$.
		
		\item The edges of a ball polyhedron $B(X_m)$ can be grouped into $m-1$ pairs of dual edges denoted by
		\begin{equation}\label{eq:dual-edge-notation} (e_1,e_1'),\ldots,(e_{m-1},e_{m-1}').
		\end{equation}
		Given an edge $e_i$ with endpoints $x,y$ its \emph{spherical length} is denoted $\theta(e_i) = 2\arcsin(|x-y|/2)$. It is the length of a geodesic linking $x$ and $y$ on a sphere of unit radius. Note that $|x-y|\leq 1$ so $\theta(e_i) \in [0,\pi/3]$.
		\item For a pair of dual edges $(e,e')$ we can also consider their effective lengths in the ball polyhedron. Recall the notation in \cite{bogosel_Meissner}: let $\phi(e), \phi(e')$ be the dihedral angles associated with $e,e'$ in the tetrahedron generated by their vertices. These dihedral angles satisfy
		\[ \phi(e) = 2\arcsin \left( \frac{\sin \frac{\theta(e')}{2}}{\cos \frac{\theta(e)}{2}}\right), \phi(e') = 2\arcsin \left( \frac{\sin \frac{\theta(e)}{2}}{\cos \frac{\theta(e')}{2}}\right).\]
		The lengths of $e$ and $e'$ are denoted by $\ell(e), \ell(e')$, respectively and are given by
		\begin{equation}\label{eq:length-edge} \ell(e) = \phi(e')\cos \frac{\theta(e')}{2}, \ell(e') = \phi(e)\cos \frac{\theta(e)}{2}.
		\end{equation}
		\item The Euclidean edge length of an edge $e$ with endpoints $x_i$, $x_j$ will be denoted by $L(e) = |x_i-x_j|$.
	\end{itemize}
	\end{framed}

We recall that a convex shape $K \in \Bbb{R}^3$ has \bo{constant width} if the distance between any parallel supporting planes to $K$ is constant. The definition extends to all dimensions $d \geq 2$. It is obvious that balls have constant width, but there are many other shapes of constant width. It turns out that extremal finite sets of unit diameter play an essential role in constructing simple, yet rich classes of bodies of constant width in $\Bbb{R}^3$.

Sallee observed in \cite{sallee_polytopes} that ball polyhedra, or Reuleaux polytopes, are dense in the class of bodies of unit constant width. However, no ball polytope itself has constant width; see \cite{meissner_hynd}.

\begin{framed}
	\bo{Meissner polyhedra.} Any ball polyhedron $B(X_m)$ can be used to generate a shape of constant width as follows. Given $X_m$, consider the $m-1$ pairs of dual edges labeled like in \eqref{eq:dual-edge-notation}. Define
	\begin{equation}\label{eq:meiss-poly}
		 M = B(X_m \cup e_1\cup\ldots \cup e_{m-1}).
	\end{equation}
	Then $M$ has constant width. These polyhedra were introduced in \cite{montejano} and detailed descriptions and proofs regarding their properties are given in \cite{meissner_hynd}.
	\begin{itemize}[noitemsep,topsep=0pt]
		\item From each pair $(e_i,e_i')_{i=1,\ldots,m-1}$, either edge can be chosen in \eqref{eq:meiss-poly}. Thus an extremal finite set of unit diameter gives $2^{m-1}$ Meissner-polyhedron constructions, not necessarily all distinct. 
		\item The simplest Meissner polyhedron is the Meissner tetrahedron introduced by Meissner in \cite{Meissner-Schilling}. It is based on the regular tetrahedron. Among the $2^3$ choices among pairs of dual edges in \eqref{eq:meiss-poly} we find only two distinct Meissner tetrahedra. 
		\item The intersection of unit spheres centered on the edge $e_i$ of the ball polyhedron $B(X_m)$ creates a particular rotation body around the dual edge $e_i'$. This surface is called a \emph{spindle} in \cite{meissner_hynd}.
		\item The area and volume of Meissner polyhedra defined by \eqref{eq:meiss-poly} were computed in \cite{hynd-vol-per,bogosel_Meissner}. The second reference gives the simplified formula
		\begin{equation}\label{eq:area-Meiss-poly}
			|\partial M| = 2\pi - 2\sum_{i=1}^{m-1}f(\theta(e_i),\theta(e_i')),
		\end{equation}
		where $\theta(e_i), \theta(e_i')$ are spherical lengths of the pairs of dual edges in \eqref{eq:dual-edge-notation}. The function $f$ is given by
		\[ f:[0,\pi/3]^2 \to \Bbb{R}: f(x,y) = y\cos \frac{y}{2}\arcsin \left( \frac{\sin \frac{x}{2}}{\cos \frac{y}{2}}\right).\]
		Observe that this formula is completely explicit and separable in terms of the pairs of dual edges.
		\item Moreover, the expression in \eqref{eq:area-Meiss-poly} corresponding to a pair of dual edges has a geometric meaning:
		\begin{equation}
			 2f(\theta(e_i),\theta(e_i')) = \theta(e_i') \ell(e_i),
			 \label{eq:formula-prod-sph-ball}
		\end{equation}
		where $\ell(e_i)$ is the length of the ball polyhedron edge defined in \eqref{eq:length-edge}.
		\item By the above construction it follows that any ball polyhedron based on an extremal set contains a Meissner polyhedron, which is a shape of constant width. Therefore if $X$ is extremal, $B(X)$ has width at least equal to one in every direction.
	\end{itemize}
	\end{framed}

The study of lower bounds for the volume of bodies of constant width in
dimension $d\geq3$ gives rise to many open questions.  In dimension three,
the classical lower bound of Chakerian \cite{Chakerian} was recently improved
by Nishioka using spectral estimates and a dual formulation
\cite{Nishioka2026}.  In the opposite direction, recent constructions give
small-volume bodies of constant width in high dimensions
\cite{ArmanEtAlSmallVolume} and bodies with the symmetries of a regular
simplex, including a new tetrahedrally symmetric three-dimensional example
\cite{ArmanEtAlTetrahedral}.  A remarkable conjecture regarding
three-dimensional bodies of constant width is stated below.

\begin{conj}\label{conj:meissner}
  The Meissner tetrahedron minimizes the volume, or equivalently the area, among all three-dimensional bodies of fixed constant width.
\end{conj}

This conjecture was stated by Bonnesen and Fenchel \cite{Bonnesen-Fenchel}. The discussion in \cite{kawohl-webe} and simulations in \cite{AntunesBogosel22} further reinforce it. Moreover, \cite{bogosel_Meissner} proves the optimality of the tetrahedron among Meissner pyramids. Proving that every Meissner polyhedron has surface area at least that of the Meissner tetrahedron would constitute progress toward the conjecture. The objective of this paper is to reinforce the conjecture and provide partial results and equivalent formulations.

For a given extremal set $X_m$ having $m \geq 4$ points we denote like in \eqref{eq:diam-pairs} by $\mathcal D$ the pairs of indices corresponding to pairs of points realizing the diameters. Furthermore, denote by 
\begin{equation}\label{eq:edge-pairs}
	\mathcal E = \{ (i,j,k,l)  : 1\leq i,j,k,l \leq m, (x_i,x_j) \text{ and }(x_k,x_l)\text{ are dual edges}\}.
\end{equation}
In the set $\mathcal{E}$ we only consider unique dual pairs: $(e_i,e_i')$ and $(e_i',e_i)$ are considered the same dual pair. 

For a scalar summand $F$, define the dual-edge functional
\begin{equation}\label{eq:general-functional}
 J_F(X_m)=\sum_{i=1}^{m-1}F\bigl(L(e_i),L(e_i')\bigr).
\end{equation}
We study both the infimum and the supremum of \eqref{eq:general-functional}
over extremal sets.  This includes the area functional
\eqref{eq:area-Meiss-poly}, total edge lengths, and the product-type
functionals considered below.  Since a dual pair is intrinsically unordered,
$F$ is understood to be symmetric unless an orientation rule is explicitly
specified; an asymmetric summand is not well defined on an unordered pair by
itself.

For a fixed diameter graph $\mathcal D$ and a fixed dual-pair list
$\mathcal E$, the basic minimization problem considered in this paper has the
form
\begin{equation}\label{eq:intro-fixed-graph-minimization}
\begin{aligned}
 \underset{x_1,\ldots,x_m\in\mathbb R^3}{\operatorname{minimize}}\quad
 &J_F(X_m),\\
 \text{subject to}\quad
 &|x_i-x_j|^2=1 &&(ij\in\mathcal D),\\
 &|x_i-x_j|^2\leq1 &&(1\leq i<j\leq m),\\
 &\sum_{i=1}^m x_i=0.
\end{aligned}
\end{equation}
The maximization problems have the same feasible set.  The centroid condition
only removes translations; the quadratic diameter constraints and, in
general, the objective remain nonconvex.

Problem \eqref{eq:intro-fixed-graph-minimization} is related to Euclidean
distance geometry, where one seeks a realization of an edge-weighted graph in
a prescribed Euclidean dimension
\cite{LibertiDistanceGeometry,MoreWuGlobalContinuation}.  Here the equations
$|x_i-x_j|=1$ realize the diameter graph in $\mathbb R^3$, so finding a
feasible starting configuration is itself a distance-graph embedding problem.
Distance-geometry formulations are typically nonconvex and are approached by
local or global nonlinear optimization, often with several initializations or
with continuation, relaxation, or graph-based formulations
\cite{MoreWuProtein,LibertiCycleFormulations}.  The present problem has
additional structure: every unspecified distance is bounded above by one,
$\mathcal D$ is required to be an extremal diameter graph, the exposed
dual-edge incidences must be realized geometrically, and the objective
optimizes distances associated with these dual pairs rather than only a
distance-realization error.  General weighted-graph embeddability is strongly
NP-hard \cite{SaxeGraphEmbedding}; this fact motivates the optimization point
of view but does not, by itself, give a complexity result for the special
extremal graphs considered here.

\bo{Contributions.} The main contributions of the paper are the following.
\begin{itemize}[topsep=2pt,noitemsep]
	\item We analyze convergent sequences of extremal sets and identify the
\emph{extremal core} of the limit.  This provides existence results for
classes of optimization problems depending on finite extremal sets.
	\item We prove a sharp isoperimetric inequality for the total spherical
length of the dual edges, with equality for the regular tetrahedron, together
with extensions to concave functions of the spherical lengths.
	\item We give an exact reformulation of the three-dimensional
Blaschke--Lebesgue problem as the maximization, over finite extremal sets, of
the total spherical product
	\begin{equation}
		 \sum_{i=1}^{m-1}\theta(e_i)\theta(e_i'),
		 \label{eq:prod-spherical}
	\end{equation}
where $(e_i,e_i')$ are the pairs of dual edges.
	\item We perform direct evaluations and constrained gradient-based
optimization on the 10,644 supplied extremal diameter graphs with at most 16
vertices.  The results suggest the conjectures summarized in
Table~\ref{tab:direct-objectives}.
	\item We investigate in detail the maximization of
\eqref{eq:prod-spherical} for a fixed diameter graph.  The simulations indicate
containment of a regular tetrahedron in the selected maximizing candidates, a
property which suggests new strategies for the three-dimensional
Blaschke--Lebesgue problem.
\end{itemize}

\bo{Structure of the paper.} Section~2 proves a compactness result for
geometric limits, gives a compact formulation for a fixed labelled diameter
graph, and recalls the linked spherical partition.
Section~3 proves a sharp additive inequality, analyzes dangling subdivisions,
and relates three product-like functionals to mixed surface area and spherical
geometry.  Section~4 describes
the numerical method, the treatment of collapsed vertices, and the evidence
for tetrahedral decorations.

Various conjectures and numerical simulations will be presented using the
database of Meissner polyhedra in \cite{meissner_graphs} and the additional
data source for $m=15,16$ described in Section~4.1.

\section{Geometric limits and linked partitions}

\subsection{Geometric limits and the extremal core}

For a finite unit-diameter set $Z$, write $D(Z)$ for its diameter graph.  Its
\emph{diameter core}, denoted $\operatorname{core}(Z)$, is the set obtained by repeatedly
deleting vertices of degree zero or one from $D(Z)$.  Equivalently,
$D(\operatorname{core}(Z))$ is the graph-theoretic $2$-core of $D(Z)$.  Repetition is
important: when a leaf is deleted, a neighbouring vertex of degree two
becomes a new leaf and must also be removed.

We use the following local stability property of ball polyhedra.  Recall
that a principal vertex belongs to at least three distinct facets, whereas a
dangling vertex is a centre belonging to exactly two facets.

\begin{lemma}[persistence of principal vertices]
\label{lem:principal-stability}
Let $X^n=(x_1^n,\ldots,x_m^n)$ be labelled extremal unit-diameter sets and
suppose that $x_i^n\to x_i$ for every $i$.  Let $X$ be the set of distinct
points among $x_1,\ldots,x_m$.  If $p$ is a principal vertex of $B(X)$, then
there are principal vertices $p_n$ of $B(X^n)$ such that $p_n\to p$.  In
particular, $p\in X$.
\end{lemma}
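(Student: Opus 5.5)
We argue via the convergence of the ball polyhedra $B(X^n)\to B(X)$ in the Hausdorff metric, together with a transversality property of principal vertices. First, $X$ is a finite set of diameter at most one, and the closed balls $\bar B(x_i^n,1)$ converge to $\bar B(x_i,1)$. Since $B(X)$ contains the (limit of the) Meissner bodies and therefore has nonempty interior, the intersections converge: $B(X^n)\to B(X)$ in Hausdorff distance. Here we use the standard fact that intersections of finitely many convex sets with a common interior point depend continuously on the sets.

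Next we describe $p$. Since $p$ is a principal vertex of $B(X)$, it lies on at least three spheres $\partial B(y_1),\partial B(y_2),\partial B(y_3)$ with distinct centres $y_a\in X$. We may choose these so that $y_1,y_2,y_3$ are not collinear with $p$ in a degenerate way. More precisely, the outward normals $p-y_a$ should be linearly independent; this is exactly what makes $p$ a $0$-dimensional face (an extreme point with a full-dimensional normal cone) rather than a point in the relative interior of an edge. Each $y_a$ equals $x_{i_a}$ for some label $i_a$, so $y_a=\lim_n x_{i_a}^n$.

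We then track $p$ along the sequence. Solving $|q-x^n_{i_a}|=1$ for $a=1,2,3$ near $p$ uses the implicit function theorem, which applies because the Jacobian rows $2(p-y_a)$ are independent. This gives points $q_n\to p$ lying on three spheres of $B(X^n)$. It remains to see that $q_n\in B(X^n)$, and that $q_n$ is actually a vertex of $B(X^n)$ (possibly with further spheres through it). The other centres fall into two groups.

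\begin{itemize}
\item Centres $x_j$ with $|p-x_j|<1$: then $|q_n-x_j^n|<1$ for large $n$.
\item Centres with $|p-x_j|=1$: these are the delicate case, and this is the main obstacle. Several labels may collapse onto the same $y_a$, and additional spheres may pass through $p$.
\end{itemize}

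To handle the second group, I would argue differently. The normal cone $N_p$ of $B(X)$ at $p$ is full-dimensional, spanned by the vectors $p-y$. Pick a direction $u$ in the interior of $N_p$, and let $p_n$ be the point of $B(X^n)$ maximizing $\langle u,\cdot\rangle$. By Hausdorff convergence, and because $p$ is the unique maximizer of $u$ on $B(X)$, we get $p_n\to p$.

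The point $p_n$ lies on the boundary of $B(X^n)$. Its normal cone contains an open set of directions near $u$, since the same argument applies to every $u'$ in a neighbourhood, giving maximizers that converge to $p$ as well. Because the boundary consists of spherical faces and circular arcs, only a vertex has a normal cone with nonempty interior. Points on faces have $1$-dimensional normal cones, and points on arcs have $2$-dimensional ones. To make this rigorous, I would pick $u$ generic, so that it avoids the finitely many $2$-dimensional normal cones of arcs of $B(X^n)$. Then the exposed point in direction $u$ is a vertex, and every vertex of $B(X^n)$ is an essential vertex $x_k^n$.

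Finally, one must show that $p_n$ is principal, i.e.\ lies on at least three spheres. A dangling vertex lies on exactly two spheres, and its normal cone is $2$-dimensional. Its normal cone is spanned by $p_n-x^n_j$ over the spheres through it, so a full-dimensional cone forces at least three centres. Hence the exposed point $p_n$ is a principal vertex. Then $p_n=x_{k_n}^n$ for some label $k_n$, and passing to a subsequence with constant $k_n=k$ gives $p=\lim x_k^n=x_k\in X$. Since every subsequence yields such a limit, the conclusion holds along the full sequence.
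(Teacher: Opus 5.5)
Your route matches the paper up to the exposing direction: Hausdorff convergence $B(X^n)\to B(X)$, a direction $u\in\operatorname{int}N_{B(X)}(p)$, and maximizers $p_n$ of $\langle u,\cdot\rangle$ on $B(X^n)$ with $p_n\to p$. The gap is in showing that $p_n$ is a principal vertex.

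Your assertion that $N_{B(X^n)}(p_n)$ contains an open set of directions near $u$, ``since the same argument applies to every $u'$'', does not follow. For each $u'$ you get maximizers $p_n(u')\to p$, but nothing forces $p_n(u')=p_n(u)$ for a fixed $n$. Normal cones are only upper semicontinuous along $p_n\to p$.

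The genericity repair is false. The arcs of $B(X^n)$ do not have finitely many normal cones. At a point $z$ of the arc $F_i\cap F_j$ the normal cone is $\operatorname{pos}\{z-x_i^n,z-x_j^n\}$, and it rotates about the axis $x_i^n-x_j^n$ as $z$ runs along the circle. The directions exposed at relative-interior points of an arc therefore form a region with nonempty interior, and the same holds for facets. A generic $u$ can thus be exposed at an interior point of a short arc or a small facet of $B(X^n)$ near $p$, which is exactly the scenario that must be excluded.

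What is missing is a choice of $u$ that depends only on the limit data, together with a uniform bound on the active normals. The paper takes $u\in\operatorname{int}N_{B(X)}(p)$ outside the finite union of the cones $C_{ij}=\operatorname{pos}\{p-x_i,p-x_j\}$, $1\le i,j\le m$. This union has empty interior and does not depend on $n$.

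Suppose $p_n$ were not principal along a subsequence. Tightness of $X^n$ (extended GHS) leaves at most two active spheres at $p_n$. The normal-cone formula for $B(X^n)$ then gives, with labels fixed along a further subsequence, $u=\alpha_n(p_n-x_i^n)+\beta_n(p_n-x_j^n)$ with $\alpha_n,\beta_n\ge0$. Since $\langle p_n-x_i^n,p_n-x_j^n\rangle=1-\tfrac12|x_i^n-x_j^n|^2\ge\tfrac12$, one has $|u|^2\ge\alpha_n^2+\beta_n^2$. So the coefficients are bounded, and in the limit $u\in C_{ij}$, a contradiction. This bounded-coefficient limit is what your genericity argument was meant to supply.

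Separately, full-dimensionality of $N_{B(X)}(p)$ needs a proof, not the gloss that it is ``exactly what makes $p$ a $0$-dimensional face''. A principal vertex is defined by lying on at least three distinct facets, so coplanar active normals must be ruled out. The paper does this via Lemma~6.1 of \cite{disk-polygons} after discarding inessential centres. The implicit-function-theorem detour can simply be dropped.
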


\begin{proof}
Put $K_n=B(X^n)$ and $K=B(X)$.  These convex bodies converge in the
Hausdorff metric.  Indeed, if
\[
 f_n(z)=\max_i|z-x_i^n|,\qquad f(z)=\max_i|z-x_i|,
\]
then
$\lVert f_n-f\rVert_\infty\leq\max_i|x_i^n-x_i|\to0$.  Jung's theorem gives
a point $c$ and an $r<1$ such that $|c-x_i|\leq r$ for every $i$.  For
$z\in K$ and $0<t<1$, convexity of $f$ gives
\[
 f((1-t)z+tc)\leq(1-t)f(z)+tf(c)\leq1-t(1-r).
\]
Put $\epsilon_n=\lVert f_n-f\rVert_\infty$ and, for all large $n$, take
$t_n=2\epsilon_n/(1-r)<1$.  Then
$f_n((1-t_n)z+t_nc)\leq1-\epsilon_n<1$, uniformly for $z\in K$, and
the displacement tends uniformly to zero.  This proves
$K\subset K_n+o(1)B$.  Conversely, the sets $K_n$ are uniformly bounded,
and every limit of points $z_n\in K_n$ belongs to $K$ by uniform
convergence of $f_n$.  Hence $K_n\to K$ in the Hausdorff metric.

For a convex body $C$ and $q\in C$, write
\[
 N_C(q)=\{u:\langle u,z-q\rangle\leq0\text{ for every }z\in C\}.
\]
For a finite set of vectors $A$, write
$\operatorname{pos}A=\{\sum_{a\in A}\lambda_a a:\lambda_a\geq0\}$ for its
positive hull.  For the principal vertex $p$ in the statement of the lemma,
the strict interior point supplied by Jung's theorem gives the standard
normal-cone formula
\begin{equation}\label{eq:ball-intersection-normal-cone}
 N_K(p)=\operatorname{pos}\{p-x_i:|p-x_i|=1\}.
\end{equation}
Remove the inessential centres of $X$; this does not change $K$ by
\cite[Theorem~5.1]{disk-polygons}.  Lemma~6.1 of \cite{disk-polygons} says
that, at a principal vertex, the rays from the vertex to the incident centres
are the extreme rays of an apexed cone.  There are at least three such rays,
so this cone is three-dimensional: a pointed cone of dimension at most two
has at most two extreme rays.  Equivalently, $N_K(p)$ has nonempty interior.

For $1\leq i,j\leq m$, put
\[
 C_{ij}=\operatorname{pos}\{p-x_i,p-x_j\}.
\]
The finite union of the two-generated cones $C_{ij}$ has empty interior in
$\mathbb R^3$.  Choose
\[
 u\in\operatorname{int}N_K(p)\setminus\bigcup_{i,j}C_{ij}.
\]
The intersection of finitely many balls with nonempty interior is strictly
convex.  Hence $p$ is the unique maximizer of
$z\mapsto\langle u,z\rangle$ on $K$.  Let $p_n$ be the corresponding unique
maximizer on $K_n$.  Hausdorff convergence gives $p_n\to p$.

We claim that $p_n$ is principal for all sufficiently large $n$.  Otherwise,
pass to a subsequence on which it is not principal.  The extended GHS theorem
\cite[Theorem~7.1]{disk-polygons} makes each $X^n$ tight, so at most two
generating facets are active at $p_n$.  The normal-cone formula for an
intersection of balls applies because Jung's theorem supplies a strict
feasible point.  After fixing the two labels along a further subsequence, it
gives
\[
 u=\alpha_n(p_n-x_i^n)+\beta_n(p_n-x_j^n),
 \qquad \alpha_n,\beta_n\geq0,
\]
where one coefficient may vanish.  The active normals have unit length, and
the diameter bound yields
\[
 \langle p_n-x_i^n,p_n-x_j^n\rangle
 =1-\frac12|x_i^n-x_j^n|^2\geq\frac12.
\]
Consequently $|u|^2\geq\alpha_n^2+\beta_n^2$, so the coefficients are
bounded.  Passing to the limit gives
$u\in\operatorname{pos}\{p-x_i,p-x_j\}=C_{ij}$, contrary to the choice of
$u$.  Thus $p_n$ is eventually principal.

Again by Theorem~7.1 of \cite{disk-polygons},
$\Ver B(X^n)=X^n$.  Hence $p_n=x_{k(n)}^n$; a fixed label occurs along a
subsequence, and its limit is $p$.  Therefore $p\in X$.
\end{proof}

For a finite unit-diameter set $A$, a centre $a\in A$ is called
\emph{essential} if
\[
 B(A)\subsetneq B(A\setminus\{a\}).
\]
We write $\operatorname{ess}(A)$ for the set of essential centres and call
$A$ \emph{tight} if $A=\operatorname{ess}(A)$.  We shall use the following
standard facts.  If a point of $A$ is incident with at least two diameters,
then it is essential, and
\begin{equation}\label{eq:essential-centres-ball}
 B(\operatorname{ess}(A))=B(A);
\end{equation}
see \cite[Lemma~3.2(ii),(iii)]{meissner_hynd}.  Moreover, if $A$ is tight
and has at least three points, every facet of $B(A)$ contains at least two
distinct principal vertices \cite[Proposition~6.1(i)]{disk-polygons}.

\begin{prop}[convergence of extremal sets]\label{prop:conv-extremal}
Let $m\geq4$ and let
$X^n=(x_1^n,\ldots,x_m^n)$ be extremal sets of unit diameter.  Suppose that
$x_i^n\to x_i$ for every $i$, and let $X$ be the set of distinct limit
points.  Let
\[
 Y=\{x\in X:\deg_{D(X)}(x)\geq2\}.
\]
Then $Y=\operatorname{core}(X)$, the set $Y$ has at least four points, and it
is an extremal set of unit diameter.  In particular,
\[
 \#E(D(Y))=2\#Y-2.
\]
Moreover, $B(X)=B(Y)$.
\end{prop}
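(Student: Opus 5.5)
The plan is to prove that the set of essential centres $E=\operatorname{ess}(X)$ is extremal, and only afterwards to identify $E$ with $Y$ and with $\operatorname{core}(X)$. The identity $B(X)=B(Y)$ will then follow from \eqref{eq:essential-centres-ball}.

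\emph{Preliminaries.} The limit set $X$ has diameter one: distances are continuous, and a diametral pair of $X^n$ with a fixed label along a subsequence passes to the limit. Two limit points at distance one are distinct, so diameters of $X^n$ give diameters of $X$. By \cite[Lemma~3.2]{meissner_hynd}, every point of $X$ of degree at least two is essential, so $Y\subseteq E$. Also $B(E)=B(X)$ by \eqref{eq:essential-centres-ball}, and $E$ is tight.

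\emph{Main step: $\Ver B(E)\subseteq E$.} I would use the characterisation in the extended GHS theorem \cite[Theorem~7.1]{disk-polygons}: a tight set $A$ is extremal, i.e.\ has $2\#A-2$ diameters, exactly when $\Ver B(A)=A$. The inclusion $A\subseteq\Ver B(A)$ should be automatic for tight sets, since each facet contains its centre's diameter neighbours. So only $\Ver B(E)\subseteq E$ needs proof. A vertex of $B(E)=B(X)$ is either principal or dangling.
\begin{itemize}
\item A dangling vertex is by definition a centre, so it lies in $E$.
\item A principal vertex $p$ lies in $X$ by Lemma~\ref{lem:principal-stability}. Moreover $p$ lies on at least three facets, each opposite a centre at distance one from $p$. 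Hence $\deg_{D(X)}(p)\geq3$, so $p\in Y\subseteq E$.
\end{itemize}
Theorem~7.1 then shows that $E$ is extremal.

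\emph{Identification $E=Y=\operatorname{core}(X)$.} In an extremal set every point is principal or dangling, so it has degree at least two. Thus every point of $E$ has degree at least two already in $D(E)\subseteq D(X)$, which gives $E\subseteq Y$ and hence $E=Y$. For the core:
\begin{itemize}
\item Any vertex surviving the iterated pruning has degree at least two in $D(X)$, so $\operatorname{core}(X)\subseteq Y$.
\item Conversely, $D(Y)$ is a subgraph of $D(X)$ with minimum degree at least two. Pruning never deletes a vertex of such a subgraph, so $Y\subseteq\operatorname{core}(X)$.
\end{itemize}
The edge count $\#E(D(Y))=2\#Y-2$ is just extremality. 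For $\#Y\geq4$, note that a unit-diameter set with $k\leq3$ points has at most $k(k-1)/2<2k-2$ diameters when $k\geq3$, and the cases $k\leq2$ also fail. One must also rule out $Y=\emptyset$. For this I would argue that $B(X)$ has nonempty interior (by Jung's theorem, as in the proof of Lemma~\ref{lem:principal-stability}). Then $B(X)$ has width at least one, being a limit of the $B(X^n)$, and so it cannot be a single lens or ball. Hence it has at least one principal vertex, which by the argument above lies in $Y$.

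\emph{Expected obstacle.} The delicate step is the second one. I must make sure that ``vertex'' in \cite[Theorem~7.1]{disk-polygons} means exactly principal or dangling vertex as used here, and that the theorem gives the converse implication (vertex set equal to the tight centre set implies extremal), not only the forward one. If only the forward direction is available, I would instead recount via Euler's formula $F+V=E+2$ for $B(Y)$. Here $F=\#Y$ and $V=\#Y$ by the vertex identification, so the number of edges is $2\#Y-2$. Each edge then has to be matched with a diameter through the dual-edge correspondence, which requires rechecking that cross distances are diameters in the limit.
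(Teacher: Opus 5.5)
There is a genuine gap. It sits in the step you treat as free, $E\subseteq\Ver B(E)$, together with the lower bound on the size of $E$ that this step needs.

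\textbf{The inclusion $E\subseteq\Ver B(E)$ is not automatic.} You claim $A\subseteq\Ver B(A)$ holds for every tight set $A$, and your application of \cite[Theorem~7.1]{disk-polygons} rests on this. It is false. A centre $a\in A$ is a vertex of $B(A)$ exactly when it lies on two facets, i.e.\ when it has at least two diameter neighbours in $A$. The remark that ``each facet contains its centre's diameter neighbours'' says nothing about how many there are. Tightness does not force two: take $|a-b|=1$ and points $c,d$ with the other five distances equal to $0.9$. All four centres are essential, yet no centre has two diameter neighbours, so no centre is a vertex.

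What you actually need is that every $q\in E$ has two diameter neighbours in $E$. This is the real content of the paper's Steps~2--3. By \cite[Proposition~6.1(i)]{disk-polygons}, the facet $F_q$ of the tight set $E$ contains two distinct principal vertices. Your own principal-vertex argument (Lemma~\ref{lem:principal-stability} plus degree $\geq3$) puts both in $Y\subseteq E$ at unit distance from $q$. So the repair is available from your main step, but it requires $\#E\geq3$.

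\textbf{The size bound is not established.} Your width argument cannot exclude the lens. The body $B(\{a,b\})$ with $|a-b|=d\leq1$ has minimal width $2-d\geq1$, so ``width at least one'' rules nothing out. In the lens case $\Ver B(E)=\emptyset$. Your main step then holds vacuously while $E\not\subseteq\Ver B(E)$, and nothing in your proposal excludes this case. Deducing $\#Y\geq4$ from the edge count after invoking Theorem~7.1 is also circular, since the paper applies that theorem only once $\#Y\geq4$ is known.

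The missing ingredient is combinatorial information about the limiting diameter graph. The paper supplies it first, in Step~1:
\begin{itemize}
\item After passing to a subsequence with a fixed labelled diameter graph $G$, extremal diameter graphs have $\chi(G)=4$.
\item The map $i\mapsto x_i$ is a graph homomorphism $G\to D(X)$, because unit distances persist and force distinct limits.
\item Hence $D(X)$ contains a vertex-minimal $4$-chromatic subgraph. Such a subgraph has minimum degree at least three, so its at least four vertices lie in $Y$.
\end{itemize}
Once $\#E\geq\#Y\geq4$ is known, Proposition~6.1(i) together with your main step gives $E=\Ver B(E)$. Your identification of $E$ with $Y$ and with the $2$-core then goes through as written.
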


\emph{Proof.}
We divide the argument into three steps.

\emph{Step 1: $Y$ has at least four points.}
There are only finitely many labelled graphs on $m$ vertices.  After passing
to a subsequence, which does not change $X$, we may consequently assume that
all $D(X^n)$ are the same graph $G$.  The strong self-duality theorem for
critical extremal diameter graphs
\cite[Theorem--Definition~9.1]{disk-polygons}, together with
\cite[Theorem~5.1]{meissner_graphs}, gives $\chi(G)=4$ in the critical case.
If $G$ has dangling vertices, delete its degree-two dangling vertices
successively.  Each deletion preserves extremality and leaves, after
finitely many steps, a critical extremal graph $G_0$, so $\chi(G_0)=4$.
Reinsert the deleted vertices in reverse order.  At the time of reinsertion
each has only its two former neighbours, and hence any four-colouring extends
to it.  Since $G_0\subset G$, this proves $\chi(G)=4$ in general.
Define $\phi(i)=x_i$.  If $ij\in E(G)$, then
$|x_i^n-x_j^n|=1$ for every $n$, and hence $|x_i-x_j|=1$.  In particular,
$x_i\neq x_j$, so $\phi$ is a graph homomorphism from $G$ to $D(X)$.  It
follows that $\chi(D(X))\geq\chi(G)=4$.

Choose a vertex-minimal subgraph $H\subset D(X)$ with $\chi(H)=4$.  Every
vertex of $H$ has degree at least three in $H$: otherwise, a $3$-colouring
after deletion of a vertex of degree at most two could be extended to $H$.
Thus every vertex of $H$ belongs to $Y$, and $\#Y\geq\#H\geq4$.

\emph{Step 2: $Y$ is precisely the set of essential centres of $X$.}
Put $E=\operatorname{ess}(X)$.  Every point of $Y$ is incident with at least
two diameters of $X$, so $Y\subset E$.  In particular, $E$ has at least four
points.  By \eqref{eq:essential-centres-ball},
\[
 B(E)=B(X),
\]
and $E$ is tight.  Indeed, if $q\in E$, then
\[
 B(E\setminus\{q\})\supset B(X\setminus\{q\})
 \supsetneq B(X)=B(E),
\]
so $q$ remains essential after the inessential centres of $X$ are removed.
The facet $F_q$ of $B(E)$ therefore contains two
distinct principal vertices $a,b$.  Each is at unit distance from at least
three centres of $E\subset X$, and hence is also a principal vertex of
$B(X)$.  By Lemma~\ref{lem:principal-stability}, $a,b\in X$.  Since
$a,b\in F_q$, we have
\[
 |q-a|=|q-b|=1.
\]
Thus $q$ has at least two neighbours in $D(X)$, so $q\in Y$.  This proves
$E\subset Y$, and consequently
\begin{equation}\label{eq:core-ball-equality}
 Y=\operatorname{ess}(X),\qquad B(Y)=B(X).
\end{equation}
In particular, $Y$ is tight.

\emph{Step 3: $Y$ is the vertex set of $B(Y)$ and is the $2$-core of
$D(X)$.}
Fix $y\in Y$.  Since $Y$ is tight and has at least four points, the facet
$F_y$ of $B(Y)$ contains two distinct principal vertices $a,b$.  Each is at
unit distance from at least three centres of $Y\subset X$, so each is also a
principal vertex of $B(X)$.  Lemma~\ref{lem:principal-stability} therefore
gives $a,b\in X$.  Their three neighbours in $Y$ show that they have degree
at least three in $D(X)$, and hence $a,b\in Y$.  Thus $y$ has at least two
distinct diameter neighbours in $Y$.  As $y$ is itself a centre of $B(Y)$,
it is a dangling or principal vertex according as it has exactly two or at
least three such neighbours.  This proves $Y\subset\Ver B(Y)$.

Conversely, a dangling vertex of $B(Y)$ belongs to $Y$ by definition.  If
$p$ is a principal vertex of $B(Y)=B(X)$, then
Lemma~\ref{lem:principal-stability} gives $p\in X$.  Since $p$ is at unit
distance from at least three centres in $Y$, it belongs to $Y$.  Hence
\begin{equation}\label{eq:core-is-vertex-set}
 Y=\Ver B(Y).
\end{equation}

It follows from the preceding argument that every vertex of $D(Y)$ has
degree at least two.  On the other hand, every point of $X\setminus Y$ has
degree at most one already in $D(X)$ and therefore cannot belong to its
graph-theoretic $2$-core.  Hence $Y=\operatorname{core}(X)$.

Finally, $D(Y)$ has minimum degree at least two, so $Y$ contains a pair at
distance one.  As $Y\subset X$ and $\diam X\leq1$, this gives $\diam Y=1$.
Now $\#Y\geq4$ and \eqref{eq:core-is-vertex-set} allow us to apply the
extended GHS theorem \cite[Theorem~7.1]{disk-polygons}: $Y$ has
$2\#Y-2$ diameters and is extremal.
\hfill$\square$

Proposition~\ref{prop:conv-extremal} is the geometric compactness statement
needed at a collision: an exact limit of extremal configurations remains in
the extremal class after merging coincident points and pruning its diameter
graph.  It does not say that the dual-edge incidences of the labelled graph
survive the collision; those must still be reconstructed from $Y$.

\subsection{Compact labelled relaxation and degenerate limits}

Let $\mathcal D$ be a labelled diameter graph on $\{1,\ldots,m\}$ and let
$\mathcal E$ be a fixed list of unordered dual-edge pairs.  We consider the
normalized feasible set
\begin{equation}\label{eq:fixed-feasible-set}
 \mathcal A(\mathcal D)=\left\{(x_1,\ldots,x_m)\in(\Bbb R^3)^m:
 \begin{array}{l}
 \sum_{i=1}^m x_i=0,\\[-2pt]
 |x_i-x_j|=1\quad ((i,j)\in\mathcal D),\\[-2pt]
 |x_i-x_j|\leq 1\quad (1\leq i<j\leq m)
 \end{array}\right\}.
\end{equation}
The centroid condition removes translations.  Rotations are deliberately not
factored out.
When a labelled summand is asymmetric, one representative ordering is fixed
for each member of $\mathcal E$; this is an additional choice and does not affect
compactness.

Inside this relaxation, let
$\mathcal A_{\rm geo}(\mathcal D,\mathcal E)$ be the subset of distinct
configurations whose complete unit-distance graph is exactly $\mathcal D$ and
whose exposed dual-edge list is exactly $\mathcal E$.  This subset need not be
closed.  Thus $\mathcal A(\mathcal D)$ is the compact labelled relaxation,
whereas optimization on $\mathcal A_{\rm geo}$ generally has only an a priori
supremum or infimum.

\begin{prop}\label{prop:fixed-graph-compact}
The set $\mathcal A(\mathcal D)$ is compact.  If it is nonempty, then for every
continuous $F:[0,1]^2\to\Bbb R$, the labelled functional
\begin{equation}\label{eq:fixed-labelled-functional}
 J_{\mathcal E}(X)=\sum_{(i,j,k,l)\in\mathcal E}
 F(|x_i-x_j|,|x_k-x_l|)
\end{equation}
attains its minimum and maximum on $\mathcal A(\mathcal D)$.
\end{prop}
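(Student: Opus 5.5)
The plan is to show that $\mathcal A(\mathcal D)$ is closed and bounded in $(\Bbb R^3)^m\cong\Bbb R^{3m}$, and then apply the Weierstrass extreme value theorem to the continuous function $J_{\mathcal E}$.

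\emph{Closedness.} The set $\mathcal A(\mathcal D)$ is cut out by finitely many conditions on the coordinates:
\begin{itemize}[noitemsep]
\item the linear equation $\sum_i x_i=0$;
\item the polynomial equations $|x_i-x_j|^2=1$ for $(i,j)\in\mathcal D$;
\item the polynomial inequalities $|x_i-x_j|^2\le 1$.
\end{itemize}
Each condition is the preimage of a closed subset of $\Bbb R$ under a continuous map. Hence $\mathcal A(\mathcal D)$ is a finite intersection of closed sets and is closed.

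\emph{Boundedness.} Here the centroid condition is used. For any $X\in\mathcal A(\mathcal D)$ and any index $i$,
\[
 x_i=x_i-\frac1m\sum_{j=1}^m x_j=\frac1m\sum_{j=1}^m (x_i-x_j),
\]
so $|x_i|\le \frac1m\sum_j|x_i-x_j|\le \frac{m-1}{m}<1$. Therefore $\mathcal A(\mathcal D)$ lies in the product of unit balls. By Heine--Borel it is compact.

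\emph{Extrema.} On $\mathcal A(\mathcal D)$ every distance $|x_i-x_j|$ lies in $[0,1]$, so each summand $F(|x_i-x_j|,|x_k-x_l|)$ is well defined. It is a composition of the continuous maps $X\mapsto(|x_i-x_j|,|x_k-x_l|)\in[0,1]^2$ and $F$. Since $\mathcal E$ is finite, $J_{\mathcal E}$ is a finite sum of continuous functions and hence continuous. If $\mathcal A(\mathcal D)$ is nonempty, Weierstrass gives a minimizer and a maximizer.

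Two remarks on the setup. First, if a summand is asymmetric, fixing one ordering per element of $\mathcal E$, as stipulated before the statement, leaves $J_{\mathcal E}$ a well-defined continuous function. Second, collisions $x_i=x_j$ for $(i,j)\notin\mathcal D$ are allowed in the relaxation, and this is exactly why compactness holds here while it may fail for $\mathcal A_{\rm geo}$.

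I expect no real obstacle. The only point needing care is to use the centroid normalization for boundedness, since without it translations destroy compactness.
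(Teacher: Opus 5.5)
Your proposal is correct and follows the paper's proof: closed constraints, the centroid bound $|x_i|\leq (m-1)/m$, and continuity of $J_{\mathcal E}$ on the compact set. You also check explicitly that all distances lie in $[0,1]$, so $F$ is evaluated inside its domain; the paper leaves this implicit.
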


\emph{Proof.}  The defining constraints are closed.  Moreover, for each $i$,
the centroid condition and the diameter bound give
\[
 |x_i|=\left|\frac1m\sum_{j=1}^m(x_i-x_j)\right|
 \leq \frac{m-1}{m}.
\]
Thus $\mathcal A(\mathcal D)$ is bounded and hence compact.  The assertion for
$J_{\mathcal E}$ follows from continuity. \hfill$\square$

Proposition~\ref{prop:fixed-graph-compact} concerns a fixed \emph{formal}
dual-pair list.  We now record the corresponding geometric limit statement,
where dual pairs are reconstructed after taking the diameter core.  The
extra exposure information is not determined merely by the four cross
distances in the dual-edge criterion.  We therefore separate persistence of
the dual pairs of the limit from the stronger requirement that all remaining
pairs degenerate.

\begin{deff}[Compatible and stable convergence of exposed dual pairs]
\label{def:stable-dual-pairs}
Let $X^n=(x_1^n,\ldots,x_N^n)$ be labelled extremal unit-diameter sets and,
after translations, suppose that $x_i^n\to z_i$ for every $i$.  Let $Z$ be
the set of distinct points among the $z_i$ and put
$X=\operatorname{core}(Z)$.  We say that the exposed dual pairs converge
\emph{compatibly} if, for all sufficiently large $n$, there are
injections
\[
 \iota_n:\mathcal E(X)\longrightarrow\mathcal E(X^n)
\]
such that, for every $(e,e')\in\mathcal E(X)$, the four endpoints of
 $\iota_n(e,e')$ converge to those of $(e,e')$, up to exchanging the two
 edges and the endpoints of either edge.
The convergence is called \emph{stable} if, in addition, the unmatched
pairs collapse uniformly, in the precise sense that
 \begin{equation}\label{eq:uniform-unmatched-collapse}
  \max_{(a,a')\in
  \mathcal E(X^n)\setminus\iota_n(\mathcal E(X))}
  \min\{L_{X^n}(a),L_{X^n}(a')\}\longrightarrow0,
 \end{equation}
 where the maximum of the empty set is understood to be zero.
Thus compatibility requires every dual pair of the limit to have a distinct
lift.  Stability additionally says that the nondegenerate antipodal
rectangle pairs in the linked partitions converge with multiplicity one to
the rectangle pairs of $\mathcal P(X)$, while every remaining rectangle pair
degenerates; the linked partitions are introduced in
Subsection~\ref{subsec:linked-partitions}.
\end{deff}

Let $q_X(e)\geq0$ be a size assigned to an exposed edge $e$ of $B(X)$.  In
the result below we assume that, for some $Q<\infty$,
\begin{equation}\label{eq:q-stability-assumptions}
 0\leq q_X(e)\leq Q,
\end{equation}
uniformly over the class under consideration; that $q$ is continuous along
endpointwise convergent nondegenerate exposed pairs; and that
$q_{X^n}(e_n)\to0$ whenever the endpoints of $e_n$ coalesce.  The Euclidean
chord length $L$, the spherical (geodesic) length $\theta$, and the
ball-polyhedron arc length $\ell$ satisfy these assumptions.  For an
extremal set $X$, define the intrinsic functional
\begin{equation}\label{eq:intrinsic-dual-functional}
 J_F^q(X)=\sum_{(e,e')\in\mathcal E(X)}
 F\bigl(q_X(e),q_X(e')\bigr),
\end{equation}
where $\mathcal E(X)$ is the reconstructed set of exposed dual-edge pairs.
In this intrinsic definition $F$ is assumed symmetric.  An asymmetric
summand requires an oriented dual-pair structure and a compatible rule for
transporting that orientation through a limit.

\begin{prop}[Functionals under compatible and stable convergence]
\label{prop:dual-functional-convergence}
Let $X^n\to Z$ be a convergent sequence for which the exposed dual pairs
converge compatibly, and put
\[
 X=\operatorname{core}(Z).
\]
Fix injections $\iota_n$ witnessing this compatibility.
Thus $X$ is the extremal set supplied by
Proposition~\ref{prop:conv-extremal}.  Suppose that $q$ satisfies
\eqref{eq:q-stability-assumptions} and the two continuity assumptions above,
and let $F:[0,Q]^2\to[0,\infty)$ be continuous and symmetric.
\begin{enumerate}[label=(\alph*),noitemsep]
 \item One has
 \begin{equation}\label{eq:dual-functional-lsc}
  J_F^q(X)\leq\liminf_{n\to\infty}J_F^q(X^n).
 \end{equation}
 \item If, in addition, the convergence is stable and
 \begin{equation}\label{eq:F-vanishing-axes}
  st=0\quad\Longrightarrow\quad F(s,t)=0,
 \end{equation}
 then
 \begin{equation}\label{eq:dual-functional-continuity}
  J_F^q(X^n)\longrightarrow J_F^q(X).
 \end{equation}
\end{enumerate}
\end{prop}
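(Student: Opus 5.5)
Both parts come from splitting $J_F^q(X^n)$ into matched and unmatched dual pairs, using the injections $\iota_n$ fixed in the statement. For large $n$ write
\[
 J_F^q(X^n)=\sum_{(e,e')\in\mathcal E(X)}F\bigl(q_{X^n}(\iota_n(e,e'))\bigr)
 +\sum_{(a,a')\in\mathcal E(X^n)\setminus\iota_n(\mathcal E(X))}F\bigl(q_{X^n}(a),q_{X^n}(a')\bigr)
 =:S_n+R_n .
\]
Here $F(q(\cdot))$ is shorthand for $F$ evaluated at the pair of sizes. Symmetry of $F$ makes each summand independent of the order of the two edges in a pair. It is also independent of the endpoint labelling, since $q$ depends only on the edge. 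So the ``up to exchanging'' clause in Definition~\ref{def:stable-dual-pairs} causes no ambiguity.

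The first step is $S_n\to J_F^q(X)$. Fix $(e,e')\in\mathcal E(X)$. By compatibility, the four endpoints of $\iota_n(e,e')$ converge to those of $(e,e')$, after possibly swapping. The limit pair is a genuine exposed dual pair of the extremal set $X$ given by Proposition~\ref{prop:conv-extremal}, so its edges are nondegenerate. The continuity assumption on $q$ along endpointwise convergent nondegenerate exposed pairs then gives $q_{X^n}$ of the lifted edges $\to q_X(e),q_X(e')$. Continuity of $F$ on the compact square $[0,Q]^2$ gives convergence of each summand. Since $\mathcal E(X)$ is finite, with $\#X-1$ pairs, $S_n\to J_F^q(X)$. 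The one point to check is that the sequences of sizes stay in $[0,Q]$, which is exactly \eqref{eq:q-stability-assumptions}.

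For (a), I use $F\ge0$, so $R_n\ge0$. Then $J_F^q(X^n)\ge S_n$, and taking $\liminf$ gives \eqref{eq:dual-functional-lsc}.

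For (b), it remains to show $R_n\to0$, and this is the main obstacle. The number of unmatched pairs is at most $N-1$, where $N$ is the fixed number of labelled points, so it suffices to show that the maximum unmatched summand tends to $0$. By \eqref{eq:F-vanishing-axes}, $F$ vanishes on both axes $\{s=0\}\cup\{t=0\}$ of $[0,Q]^2$. $F$ is uniformly continuous on this compact square, so for every $\varepsilon>0$ there is $\delta>0$ with $F(s,t)<\varepsilon$ whenever $\min\{s,t\}<\delta$.

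The stability hypothesis \eqref{eq:uniform-unmatched-collapse} is stated for the chord length $L$, not for $q$. I therefore need: if $L_{X^n}(a_n)\to0$, then $q_{X^n}(a_n)\to0$, uniformly over the finitely many unmatched edges. This follows from the assumption that $q\to0$ when endpoints coalesce, argued by contradiction. Suppose some sequence of unmatched edges has $L\to0$ but $q\ge\eta>0$. Pass to a subsequence with fixed labels; the endpoints then coalesce, so $q\to0$, a contradiction.

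Hence $\max\min\{q(a),q(a')\}\to0$ over unmatched pairs, each unmatched summand is eventually below $\varepsilon$, and $R_n\le (N-1)\varepsilon$ eventually. Combined with the first step this gives \eqref{eq:dual-functional-continuity}.
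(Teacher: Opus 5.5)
Your proof is correct and follows the paper's argument: you split $J_F^q(X^n)$ into matched and unmatched pairs via $\iota_n$, get convergence of the matched terms from continuity of $q$ and $F$, and obtain (a) by discarding the nonnegative unmatched terms. For (b) you use stability, the collapse assumption on $q$, and uniform continuity of $F$ together with its vanishing on the axes; your subsequence argument converting the $L$-collapse in \eqref{eq:uniform-unmatched-collapse} into uniform $q$-collapse just spells out a step the paper states in one line.
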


\begin{proof}
For every $(e,e')\in\mathcal E(X)$, write
$\iota_n(e,e')=(e_n,e_n')$, using the harmless exchanges allowed in
Definition~\ref{def:stable-dual-pairs}.  The matched terms satisfy
\[
 F\bigl(q_{X^n}(e_n),q_{X^n}(e_n')\bigr)
 \longrightarrow F\bigl(q_X(e),q_X(e')\bigr).
\]
The sum of the matched terms therefore converges to $J_F^q(X)$.  The
unmatched terms are nonnegative, so discarding them proves
\eqref{eq:dual-functional-lsc} directly.

Under stable convergence and \eqref{eq:F-vanishing-axes}, one size
in every unmatched pair tends uniformly to zero by
\eqref{eq:uniform-unmatched-collapse} and the collapse assumption on $q$,
while the other remains in $[0,Q]$.  Uniform continuity of $F$ on
$[0,Q]^2$, together with its vanishing on the coordinate axes, makes every
unmatched term tend to zero.  The number of pairs is fixed, and therefore the
whole sum converges, proving \eqref{eq:dual-functional-continuity}.
\end{proof}

\begin{rem}
The distinction between compatibility and stability is important.  If
$q_{X^n}(e_n)\to0$ and $q_{X^n}(e_n')\to t>0$, then the disappearing pair
contributes $F(0,t)$ before it is deleted from the intrinsic core cost.
Nonnegativity therefore gives lower semicontinuity, whereas continuity of the
functional across the collapse requires $F(0,t)=0$ (and symmetrically
$F(t,0)=0$).  More generally, several nondegenerate sequence pairs may have
the same limiting pair.  Compatibility allows this multiplicity, whereas
stability excludes it.  Such positive multiplicities may prevent continuity
even when $F$ vanishes on the coordinate axes.
\end{rem}

\begin{rem}[A possible non-stable collision]
The proved dual-edge theorem runs in one direction: an exposed edge has a
unique exposed dual edge, and the four cross distances equal one
\cite[Theorem~8.1]{disk-polygons}.  The converse needs exposure.  For example,
insert a dangling point $p_4$ in the relative interior of the
ball-polyhedron edge $p_2p_3$ of a regular tetrahedron, dual to $p_0p_1$.
All four cross distances between $\{p_0,p_1\}$ and $\{p_2,p_3\}$ remain one,
but $p_2p_3$ is no longer exposed: it is subdivided into $p_2p_4$ and
$p_4p_3$.  Thus pair exposure can be subdivided or remapped even when the
distance equalities persist.

A fixed-graph boundary computation for the database graph
\texttt{n11\_g0022} suggests a more substantial limiting mechanism.  Its
vertices approach four collision classes of sizes $3,3,2,3$, whose distinct
limits form a regular tetrahedron.  The diameter graph has no edge inside any
of the four classes.  Of the ten prescribed dual pairs, three acquire a
collapsing edge, while the other seven approach the three tetrahedral dual
pairs with multiplicities $3,1,3$.
After one lift of each limit pair has been chosen, four unmatched pairs
therefore have both Euclidean lengths approaching one.  The additional
labelled diameters appearing at the collision are precisely what permits this
positive multiplicity.  For the product of spherical lengths, their residual
contribution would approach $4(\pi/3)^2$.

This computation motivates the distinction in
Definition~\ref{def:stable-dual-pairs}, but it is not used as an exact
counterexample: an exact curve of distinct extremal embeddings approaching
the four-class collision has not yet been established.  Compatible
convergence is expected to follow from local persistence of the relative
interiors of nondegenerate exposed arcs, but this has not been proved here
for arbitrary collisions.  Stable convergence is stronger and rules out the
positive multiplicities described above.  Therefore Proposition
\ref{prop:dual-functional-convergence} does not imply existence of an
optimizer for the intrinsic functional over all extremal sets with at most
$m$ vertices.  The conclusion on existence remains valid for any class which
is closed under taking cores and for which every optimizing sequence has a
convergent subsequence with stable exposed dual pairs.  Proposition
\ref{prop:fixed-graph-compact} still gives existence for the compact
\emph{labelled} relaxation without additional assumptions.
\end{rem}

The compact set \eqref{eq:fixed-feasible-set} permits coincident labelled
vertices.  Along an exact convergent sequence of extremal configurations,
Proposition~\ref{prop:conv-extremal} identifies the reduced geometric limit as
an extremal set.  The formal list $\mathcal E$, however, need not be the
dual-edge list of its limiting ball polyhedron.  Moreover, a numerical solver
returns only an approximate feasible point, to which the exact proposition
cannot be applied without verification.  Consequently, when vertices
coalesce we merge coincident points, reconstruct all unit distances, prune to
the diameter core, and reconstruct the exposed dual edges.  A reduced
configuration with $m_0$ vertices is accepted only if it has $2m_0-2$
diameter edges and $m_0-1$ dual pairs.  Proposition
\ref{prop:fixed-graph-compact} proves existence for the labelled nonlinear
program; it does not identify its degenerate boundary with a fixed
combinatorial class or preserve the original dual-pair list.

The conclusions for the labelled and geometric problems should be kept
separate.  Proposition~\ref{prop:fixed-graph-compact} gives both a minimum
and a maximum for the continuous \emph{formal} cost on the compact labelled
set $\mathcal A(\mathcal D)$.  Such an optimizer may lie on a boundary where
labels coincide, and its prescribed list $\mathcal E$ need not be the
dual-edge list of the reconstructed limit.

For an intrinsic geometric cost, Proposition
\ref{prop:dual-functional-convergence}(a) gives lower semicontinuity when
$F$ is continuous and nonnegative and the dual pairs converge compatibly.
This is the conclusion relevant to a minimization problem: after
reconstruction, the value of the limiting core cannot exceed the lower limit
of the approximating values.  For a maximization problem, stable convergence
together with the axis-vanishing condition \eqref{eq:F-vanishing-axes} gives
full continuity by Proposition
\ref{prop:dual-functional-convergence}(b), and the disappearing dual pairs
contribute zero.

Neither conclusion makes $\mathcal A_{\rm geo}(\mathcal D,\mathcal E)$
closed or proves attainment within a fixed geometric class.  Accordingly,
the geometric problems below are written with an infimum or a supremum, as
appropriate.  At a collision the configuration and its intrinsic dual-pair
list must still be reconstructed.

\subsection{Linked spherical partitions and a conjectural converse}
\label{subsec:linked-partitions}

Let $X=(x_1,\ldots,x_m)$ be extremal.  For each $x_i$, intersect $\Bbb S^2$
with the cone generated by the diameter directions $x_j-x_i$ and denote the
resulting geodesically convex polygon by $P_i$.  The polygons $P_i$ and their
antipodes have pairwise disjoint interiors.  Their complement consists of
spherical rectangles indexed by the dual-edge pairs; see
\cite[Proposition~3.3]{bogosel_Meissner}.  Every boundary arc separates a
polygonal cell from a rectangular cell, with a zero-area polygonal cell
inserted when two rectangles have the same geometric edge.  If
$\mathcal P(X)$ denotes the complete partition, let $L(\mathcal P(X))$ be the
total one-dimensional boundary measure counted with rectangle-side incidence
multiplicity; thus coincident sides separated by a void cell are counted
separately.  Then
\begin{equation}\label{eq:partition-length}
 L(\mathcal P(X))=4\sum_{i=1}^{m-1}
 \bigl(\theta(e_i)+\theta(e_i')\bigr).
\end{equation}
The factor four records the antipodal rectangle and the two opposite sides of
each length.  The four vertices of a rectangular cell are also the vertices
of a Euclidean rectangle.  In particular, an interval cut out in a polygonal
cell by a great circle has length at most $\pi/3$, while such an interval in a
rectangular cell has length at most $\pi/2$.

There is a useful two-colouring hidden in this construction.  Write
$P_i^+=P_i$ for the face-normal polygon and $P_i^-=-P_i$ for the opposite
vertex-normal polygon.  The antipodal map exchanges the two colours.  The
four sides of each rectangle meet cells with colours $+,-,+,-$ in cyclic
order.  If $x_i$ is dangling, then $P_i^+$ and $P_i^-$ are geodesic arcs
rather than two-dimensional polygons.  Such an arc is retained as a marked
zero-area, or \emph{void}, cell.  Thus two rectangles are allowed to have the
same geometric edge, with the void cell recording the incidence between
them.

Following \cite[Proposition--Definition~2.1]{disk-polygons}, an extremal set
is called \emph{critical} if every vertex of its diameter graph has degree at
least three.  We now discuss the converse in this nondegenerate case, where
every polygonal cell is two-dimensional.  The void-cell case requires a
separate contraction and subdivision convention and is not part of the
statement below.

Call a cell decomposition of $\Bbb S^2$ a \emph{nondegenerate linked
two-coloured partition} if it has the following properties.
\begin{enumerate}[label=(\roman*),noitemsep]
 \item It consists of $m$ labelled antipodal polygon pairs
 $P_i^-=-P_i^+$ and $m-1$ antipodal pairs of spherical rectangles.  Rectangle
 sides meet polygonal cells with colours $+,-,+,-$ in cyclic order.  At each
 partition vertex exactly one cell of each colour and two rectangles meet;
 the two rectangles meet only at that vertex.
 \item Each antipodal pair of partition vertices is labelled
 $u_{ij}=-u_{ji}\in\Bbb S^2$, where $u_{ij}$ belongs to $P_i^+$ and
 $P_j^-$, while $u_{ji}$ belongs to $P_j^+$ and $P_i^-$.  These labels define
 a connected simple graph $D$ on $\{1,\ldots,m\}$, and
 \[
 P_i^+=\operatorname{sconv}\{u_{ij}:ij\in E(D)\}.
 \]
 After each antipodal polygon pair is contracted to its label, the incidence
 data define a cellular quadrangulation of $\Bbb{RP}^2$ whose vertices are the
 labels $i$, whose edges are the antipodal pairs $\{u_{ij},u_{ji}\}$, and
 whose faces are the antipodal rectangle pairs.
 \item A rectangle recording the prospective dual pair $(ij,k\ell)$ has,
 up to cyclic reversal, the corners
 \[
 u_{ki},\quad u_{kj},\quad u_{\ell j},\quad u_{\ell i},
 \]
 and these four points are the vertices of a Euclidean rectangle.  Every
 boundary arc has spherical length at most $\pi/3$.
\end{enumerate}
The Euler relation in $\Bbb{RP}^2$ gives $\#E(D)=2m-2$.  Thus this edge count
need not be imposed as a separate metric assumption.

\begin{prop}[automatic integration]\label{prop:partition-integration}
Every nondegenerate linked two-coloured partition determines points
$x_1,\ldots,x_m\in\Bbb R^3$, unique up to a common translation, such that
\begin{equation}\label{eq:partition-integrated-edges}
 x_j-x_i=u_{ij},\qquad |x_j-x_i|=1\quad(ij\in E(D)).
\end{equation}
In particular, the oriented vectors close around every cycle: if
$i_0i_1\ldots i_r=i_0$ is a cycle of $D$, then
\[
 \sum_{s=0}^{r-1}u_{i_si_{s+1}}=0.
\]
We refer to this property as \emph{cycle closing}; it is a consequence of the
rectangular partition and is not an additional hypothesis.  Moreover, if
$ij$ labels a side of one of the rectangles and that side has spherical
length $\alpha_{ij}$, then
\begin{equation}\label{eq:partition-side-chord}
 |x_i-x_j|=2\sin\frac{\alpha_{ij}}2\leq1.
\end{equation}
\end{prop}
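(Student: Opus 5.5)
The plan is to treat the assignment $ij\mapsto u_{ij}$ as a $1$-cochain on the graph $D$ with values in $\Bbb R^3$, and to show it is exact. The antisymmetry $u_{ji}=-u_{ij}$ is built into the labelling (ii), so $u$ is a well-defined oriented edge function. Because $D$ is connected, exactness is equivalent to cycle closing: we fix $x_1=0$ and define $x_j$ as the sum of the $u$ along any path from $1$ to $j$. Uniqueness up to a common translation is then automatic, since $x_j-x_i$ is prescribed on the edges of a connected graph. Once the points exist, the identity $|x_j-x_i|=|u_{ij}|=1$ is immediate from $u_{ij}\in\Bbb S^2$.

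To prove cycle closing I would use the quadrangulation of $\Bbb{RP}^2$ from (ii). First I would check that every rectangle boundary closes. Take a rectangle with corners $u_{ki},u_{kj},u_{\ell j},u_{\ell i}$ in cyclic order. Condition (iii) says these four points are the vertices of a Euclidean rectangle, so opposite sides are equal as vectors: $u_{kj}-u_{ki}=u_{\ell j}-u_{\ell i}$. Writing $u_{kj}=-u_{jk}$ and so on, this becomes
\[
 u_{ik}+u_{kj}+u_{j\ell}+u_{\ell i}=0 ,
\]
which is the boundary $i\to k\to j\to\ell\to i$ of the corresponding face of the quadrangulation. The delicate point is to match the cyclic order of the rectangle corners with the face boundary; the colour pattern $+,-,+,-$ in (i) is what fixes this order. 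Hence the coboundary of $u$ vanishes on every face.

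It remains to pass from faces to arbitrary cycles, and here $H_1(\Bbb{RP}^2;\Bbb Z)=\Bbb Z/2$ is the obstruction I expect to be the main difficulty. Face relations only kill null-homologous cycles, so a cycle $\gamma$ that is nontrivial in $\Bbb{RP}^2$ needs a separate argument. I would lift to the double cover $\Bbb S^2$, where the partition lives. The lift of $\gamma$ is a path from a cell to its antipode, and traversing $\gamma$ twice lifts to a closed loop. Since $\Bbb S^2$ is simply connected, the face relations, each appearing together with its antipodal copy, show that the sum of $u$ over $2\gamma$ vanishes. The edge vectors are unchanged under the deck transformation because $u_{ji}=-u_{ij}$ matches the antipodal labelling. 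So $2\sum_\gamma u=0$ in $\Bbb R^3$, which is torsion-free, and therefore $\sum_\gamma u=0$. The thing to verify carefully is that the lifted rectangle relations are exactly the same vector identities, including the antipodal ones, so that the torsion argument is legitimate.

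Finally, \eqref{eq:partition-side-chord} follows directly. A rectangle side labelled $ij$ joins two corners, say $u_{ki}$ and $u_{kj}$. Their chord is $|u_{kj}-u_{ki}|=|x_j-x_i|$ by the integration just established. For unit vectors, the chord and the geodesic length $\alpha_{ij}$ are related by $2\sin(\alpha_{ij}/2)$. The bound $\alpha_{ij}\le\pi/3$ from (iii) then gives $|x_i-x_j|\le1$.
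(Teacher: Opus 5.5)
Your proposal is correct and follows the paper's proof. The parallelogram identity gives closing on each rectangle's four-cycle, the quadrangulation of $\Bbb{RP}^2$ propagates this to every cycle of $D$, and path integration together with the chord formula finishes. The only difference is how the $\Bbb Z/2$ obstruction is handled: the paper disposes of it at once with real coefficients, $H_1(\Bbb{RP}^2;\Bbb R)=0$, whereas your lifting-and-doubling argument, combined with torsion-freeness of $\Bbb R^3$, proves the same fact by hand (it would suffice to note that $2\gamma$ is already an integral boundary).
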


\emph{Proof.}
For a rectangle with cyclic corners
$u_{ki},u_{kj},u_{\ell j},u_{\ell i}$, the Euclidean parallelogram identity
gives
\begin{equation}\label{eq:rectangle-cycle-closing}
 u_{ki}-u_{\ell i}+u_{\ell j}-u_{kj}=0.
\end{equation}
This is precisely the vector closing equation on the four-cycle
$k i\ell j k$ of $D$.

After contracting each antipodal polygon pair to its label, the incidence data
give a quadrangular cell decomposition of $\Bbb{RP}^2$ with $D$ as its
one-skeleton and one $2$-cell for each of the $m-1$ antipodal rectangle pairs.
With real coefficients,
$H_1(\Bbb{RP}^2)=H_2(\Bbb{RP}^2)=0$; see, for example,
\cite{HatcherAlgebraicTopology}.  Consequently the boundaries of these
$m-1$ rectangles form a basis of the cycle space of $D$: they span because
$H_1=0$, and they are independent because $H_2=0$.  Equation
\eqref{eq:rectangle-cycle-closing} therefore implies closing on every cycle
of $D$.  We refer to \cite{DiestelGraphTheory} for the graph-theoretic cycle
space.

Fix $x_1=0$ and define $x_j$ by summing the vectors $u_{ab}$ along a path in
$D$ from $1$ to $j$.  The cycle equations make the result independent of the
path and give \eqref{eq:partition-integrated-edges}.  Finally, two consecutive
corners $u_{ki},u_{kj}$ of a rectangle satisfy
\[
 |u_{ki}-u_{kj}|=|(x_i-x_k)-(x_j-x_k)|=|x_i-x_j|.
\]
The chord of a spherical arc of length $\alpha_{ij}$ has length
$2\sin(\alpha_{ij}/2)$, proving \eqref{eq:partition-side-chord}.
\hfill$\square$

The preceding proposition reconstructs all prescribed diameters and proves
the diameter bound for every pair appearing as a rectangle side.  What
remains is a global, rather than a cycle-closing, issue.

\begin{conj}[partition realization]\label{conj:partition-realization}
The points reconstructed from a nondegenerate linked two-coloured partition
are distinct and satisfy
\begin{equation}\label{eq:partition-global-diameter}
 |x_i-x_j|\leq1\qquad(1\leq i<j\leq m).
\end{equation}
Equivalently, the non-overlap and completeness of the spherical partition,
together with the bound $\alpha\leq\pi/3$ on its boundary arcs, force the
global diameter bound for pairs which are neither prescribed diameters nor
rectangle sides.
\end{conj}

This is the remaining metric implication in the proposed converse.  If
Conjecture~\ref{conj:partition-realization} holds, then the graph $D$ supplies
$2m-2$ unit distances among $m$ distinct points of diameter at most one.
The V\'azsonyi theorem makes the reconstructed set extremal and also rules out
any additional unit distance.  The polygon and rectangle incidences then
reproduce the original partition.  Hence, subject to this conjecture,
nondegenerate extremal unit-diameter sets modulo rigid motions are equivalent
to nondegenerate linked two-coloured partitions modulo orthogonal
transformations.  The forward implication is the construction of
\cite[Proposition~3.3]{bogosel_Meissner}.

\section{Additive and product dual-edge functionals}

\subsection{Minimization of additive functionals}

\begin{thm}
	\label{thm:minimal-length}
	a) Among all extremal finite sets of unit diameter, a regular tetrahedron is
	a minimizer of the total spherical length of dual edges
	\[ \sum_{i=1}^{m-1} \bigl(\theta(e_i)+\theta(e_i')\bigr).\]
		
	b) If $f:[0,\pi/3]\to[0,\infty)$ is concave and $f(0)=0$, then
		\[ \sum_{i=1}^{m-1} \bigl(f(\theta(e_i))+f(\theta(e_i'))\bigr)\]
		has a regular tetrahedron as a minimizer.
\end{thm}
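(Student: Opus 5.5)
The plan is to use a spherical Crofton argument on the linked partition $\mathcal P(X)$ of Subsection~\ref{subsec:linked-partitions}, together with the identity \eqref{eq:partition-length}. By that identity the quantity in a) equals $L(\mathcal P(X))/4$. So it suffices to prove
\[
L(\mathcal P(X))\ \ge\ 4\pi,
\]
which is exactly $4$ times the regular tetrahedron value $6\cdot\pi/3=2\pi$.

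For the lower bound we use the spherical Crofton formula. For a rectifiable curve system $\Gamma\subset\Bbb S^2$,
\[
\operatorname{length}(\Gamma)=\tfrac14\int_{\Bbb S^2} n_\Gamma(v)\,dv ,
\]
where $n_\Gamma(v)$ counts intersections of $\Gamma$ with the great circle $v^\perp$, with multiplicity. Fix a generic great circle $C=v^\perp$. It is cut by the boundary of the partition into intervals, each lying in a single cell. The remarks after \eqref{eq:partition-length} give the length bounds we need: an interval in a polygonal cell has length at most $\pi/3$, and an interval in a rectangular cell has length at most $\pi/2$.

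We then use the colour structure. The sides of each rectangle meet colours $+,-,+,-$ cyclically, so along $C$ the pattern of cells should alternate polygon, rectangle, polygon, and so on; void cells are handled by the multiplicity convention. Over a full circle of length $2\pi$ this forces at least $6$ crossings. The cleanest route is to pair an interval with its neighbour: each polygon–rectangle block has length at most $\pi/3+\pi/2$, which already gives $n\ge 2\cdot 2\pi/(5\pi/6)>4$, hence $n\ge6$ by evenness under antipodal symmetry. If needed, we would sharpen this using the fact that $C\cap$(polygon $\cup$ adjacent rectangle) is controlled by the Euclidean rectangle geometry. Integrating $n(v)\ge 6$ naively gives $L\ge 6\pi$, which exceeds the tetrahedral $4\pi$. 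So the bound we actually need is weaker: $n(v)\ge 4$ for a.e.\ $v$, giving $L\ge \tfrac14\cdot4\cdot4\pi=4\pi$.

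Proving $n(v)\ge4$ is the real step, and we expect it to be the main obstacle, in particular making the interval-length bounds rigorous for arbitrary great circles, including ones passing through void cells. The argument goes as follows. Two intervals cannot cover $2\pi$, since each interval has length at most $\pi/2$. The count $n(v)$ is even, because $C$ and the partition are antipodally symmetric. Hence $n\ge4$. Equality is checked for the regular tetrahedron via $\theta=\pi/3$ for all six edges, which gives $2\pi$.

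For b), let $a_1,\dots,a_{2m-2}\in[0,\pi/3]$ be the spherical lengths. A concave $f$ with $f(0)=0$ satisfies $f(t)\ge \frac{3t}{\pi}f(\pi/3)$ on $[0,\pi/3]$ (concavity along the chord from $0$). Summing and applying a) gives
\[
\sum f(a_k)\ge \frac{3f(\pi/3)}{\pi}\sum a_k\ge \frac{3f(\pi/3)}{\pi}\cdot2\pi=6f(\pi/3),
\]
which is the tetrahedron value.
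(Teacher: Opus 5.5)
Your approach is the paper's: Crofton on the linked partition, the $\pi/3$ and $\pi/2$ interval bounds, alternation, and antipodal symmetry. But there is a genuine gap, because your target is off by a factor of two.

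By \eqref{eq:partition-length} the quantity in a) equals $L(\mathcal P(X))/4$, and the tetrahedral value is $2\pi$. So you need $L(\mathcal P(X))\ge 8\pi$, not $4\pi$. Indeed, for the regular tetrahedron the partition is the cuboctahedral arrangement, whose boundary is four great circles of total length $8\pi$. With $L=\frac14\int_{\Bbb S^2}n(v)\,dv$, this requires $n(v)\ge 8$ for a.e.\ $v$. Your bound $n\ge 4$ gives only $\sum_i(\theta(e_i)+\theta(e_i'))\ge\pi$, and $n\ge 6$ gives only $3\pi/2$. Your remark that $n\ge 6$ would ``exceed the tetrahedral $4\pi$'' should have been the warning sign: a valid lower bound cannot exceed a value that is attained.

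The missing step is a divisibility observation.
\begin{enumerate}
\item Along a generic great circle $C$, every crossing passes from a polygonal cell to a rectangular one. So the two kinds of intervals alternate and are equal in number.
\item A rectangle $R$ has spherical diameter at most $\pi/2<\pi$, so $R$ and $-R$ are distinct cells and both meet $C$. Hence rectangular intervals come in antipodal pairs.
\item Therefore $n=4k$, where $k$ is the number of antipodal rectangle pairs met by $C$.
\item The case $k=1$ is impossible, because two rectangular and two polygonal intervals cover at most $2(\pi/2)+2(\pi/3)=5\pi/3<2\pi$.
\end{enumerate}
Thus $k\ge 2$ and $n\ge 8$, which is exactly the paper's count. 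Equivalently, your own $n\ge 6$ combined with $4\mid n$ already gives $n\ge 8$.

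For the alternation to be clean, the paper first deletes dangling vertices. By the spherical triangle inequality, a deletion changes the objective by $-\bigl(\theta(e)+\theta(e_1')+\theta(e_2')-\theta(e')\bigr)\le 0$. So it suffices to treat sets without void cells, and this replaces your vague appeal to a multiplicity convention. Part b) then follows exactly as you wrote.
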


\emph{Proof.} a) We first remove the void-cell case.  By the standard
dangling-edge description of extremal ball polyhedra
\cite{disk-polygons}, a vertex of degree two in the diameter graph can be
deleted by reversing
the subdivision described in Subsection~\ref{subsec:dangling-subdivision}, and the remaining set is again
extremal.  If the subdivision splits $e'$ into $e'_1,e'_2$, the total
spherical length changes by
\[
 \theta(e)+\theta(e'_1)+\theta(e'_2)-\theta(e')\geq0,
\]
where the inequality is the spherical triangle inequality.  Iterating the
deletion therefore cannot increase the objective.  It is enough to prove the
bound for an extremal set without dangling vertices.

Use the spherical partition recalled in the previous section for this reduced
set.  It contains each polygon $P_i$ and its antipode, and for each dual pair
$(e_i,e_i')$ it contains the associated spherical rectangle $R_i$ and its
antipode.  All polygonal cells are two-dimensional, so the partition has no
coincident boundary arcs separated only by a void cell.

If $u_{ij},u_{ik}$ are generators of $P_i$, then
$|u_{ij}-u_{ik}|=|x_j-x_k|\leq1$, and hence
$u_{ij}\cdot u_{ik}\geq1/2$.  Normalized positive combinations preserve
this lower bound, so the spherical diameter of $P_i$ is at most $\pi/3$.
The vertices of the rectangle $R_i$ form a Euclidean rectangle with side
lengths at most one.  Its diagonal therefore has length at most $\sqrt2$,
and all scalar products between its vertices are nonnegative.  The same
positive-combination argument shows that the spherical diameter of $R_i$ is
at most $\pi/2$.

Consider a great circle $C$ of $\Bbb{S}^2$ transverse to the partition
one-skeleton and avoiding its vertices and tangencies; the excluded circles
form a null set.  Along $C$, polygonal and rectangular cells alternate, and
the intersections occur in antipodal pairs.  Each cell has spherical diameter
strictly smaller than $\pi$, so its intersection with $C$ is either empty or
one interval.

We claim that $C$ intersects at least two pairs of antipodal rectangles.  If
only one pair were met, alternation would give only one pair of polygonal
intervals.  The total length covered would then be at most
$2(\pi/2)+2(\pi/3)<2\pi$, a contradiction.

Each rectangular interval has an entry and an exit point.  Hence $C$ meets the
boundary of the partition at least eight times.

The Cauchy-Crofton formula (see \cite{Santalo_Length} for example) allows us to compute the length of a family of curves on the sphere by counting intersections with great circles: the length of the partition $\mathcal P$ described above is
\[ L(\mathcal P) = \frac{1}{4} \int_{\Bbb S^2} \# (\mathcal P \cap \xi^\perp) d A(\xi),\]
where $\xi^\perp$ is the great circle orthogonal to $\xi\in\Bbb S^2$.
Since the intersection count is at least eight almost everywhere, it follows
that
\[ L(\mathcal P) \geq 8\pi,\]
the length of four great circles.  Combining this inequality with
\eqref{eq:partition-length} gives
\[
 \sum_{i=1}^{m-1}\bigl(\theta(e_i)+\theta(e_i')\bigr)\geq 2\pi.
\]
For the regular tetrahedron the six spherical edge lengths equal $\pi/3$, so
equality holds.

b) Since $f$ is concave, non-negative and $f(0)=0$ we have $f(t) \geq t/(\pi/3)f(\pi/3)$. Therefore
\[ \sum_{i=1}^{m-1} \bigl(f(\theta(e_i))+f(\theta(e_i'))\bigr) \geq \frac{f(\pi/3)}{\pi/3} \sum_{i=1}^{m-1} \bigl(\theta(e_i)+\theta(e_i')\bigr) \geq 6f(\pi/3), \]
where we used the inequality found at the previous point. The lower bound obtained corresponds to the value given by the regular tetrahedron: $6$ edges having length $\pi/3$. \hfill $\square$ 

\subsection{Dangling vertices and subdivision of dual pairs}
\label{subsec:dangling-subdivision}

There is a simple local operation on extremal sets that is useful in comparing
the functionals considered below.  Let $X$ be extremal, let $(e,e')$ be an
exposed dual pair, write $e=(x_i,x_j)$ and $e'=\arc{x_kx_l}$, and choose a
point $x_0$ in the relative interior of $e'$.  Since $e'\subset B(X)$ and
$|x_0-x_i|=|x_0-x_j|=1$, the set
\[
 X^+=X\cup\{x_0\}
\]
still has diameter one and has precisely two additional diameter edges.
Thus $X^+$ is extremal and $x_0$ is a dangling vertex of degree two.  At the
level of exposed edge incidences, the operation replaces
\begin{equation}\label{eq:dangling-replacement}
 (e,e')\quad\hbox{by}\quad (e_1,e'_1),(e_2,e'_2),
 \qquad
 e'_1=\arc{x_kx_0},\quad e'_2=\arc{x_0x_l},
\end{equation}
where $e_1,e_2$ are two combinatorial incidences of the same geometric edge
$e$, so every geometric edge size satisfies $q(e_1)=q(e_2)=q(e)$.  The two
copies record the sides of the digonal, or void, face created by $x_0$; this
incidence multiplicity does not assert that one edge belongs to two ordinary
dual pairs.  All other pairs are unchanged.  Deleting $x_0$ is the inverse
operation.  Here and below, ``deleting a dangling vertex'' refers to this
inverse exposed-edge operation; after deletion the intrinsic dual-pair list is
reconstructed, rather than retaining two degenerate labelled pairs.

\begin{prop}[Change under a dangling subdivision]\label{prop:dangling-subdivision}
Let $q$ be an edge size for which
\[
 q(e')\leq q(e'_1)+q(e'_2).
\]
For $J_F^q(X)=\sum_{(a,a')\in\mathcal E(X)}F(q(a),q(a'))$, the
insertion in \eqref{eq:dangling-replacement} changes the objective by
\begin{equation}\label{eq:dangling-change}
 J_F^q(X^+)-J_F^q(X)
 =F\bigl(q(e_1),q(e'_1)\bigr)+F\bigl(q(e_2),q(e'_2)\bigr)
  -F\bigl(q(e),q(e')\bigr).
\end{equation}
Consequently, insertion does not decrease the objective whenever, for every
fixed $s$, the function $t\mapsto F(s,t)$ is nondecreasing and subadditive.
Under the same hypothesis, deletion does not increase the objective.
\end{prop}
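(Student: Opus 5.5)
The identity \eqref{eq:dangling-change} is pure bookkeeping, so I will start there. By \eqref{eq:dangling-replacement}, the exposed dual-pair list of $X^+$ is obtained from $\mathcal E(X)$ by removing the single pair $(e,e')$ and adding the two pairs $(e_1,e'_1)$ and $(e_2,e'_2)$; every other pair, together with its geometric edges, is unchanged. Inserting $x_0$ does not move any existing vertex, so for an unchanged pair the edge sizes $q(a),q(a')$ are the same in $X$ and in $X^+$. In the difference $J_F^q(X^+)-J_F^q(X)$ all common terms therefore cancel term by term, and what remains is exactly the right-hand side of \eqref{eq:dangling-change}. The pair count is consistent: $X^+$ has $m$ vertices and hence $m$ dual pairs, which is one more than $X$.

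For the sign statement, set $s=q(e)$. The replacement rule gives $q(e_1)=q(e_2)=s$, so the change equals
\[
F\bigl(s,q(e'_1)\bigr)+F\bigl(s,q(e'_2)\bigr)-F\bigl(s,q(e')\bigr).
\]
Subadditivity of $t\mapsto F(s,t)$ gives
\[
F\bigl(s,q(e'_1)\bigr)+F\bigl(s,q(e'_2)\bigr)\geq F\bigl(s,q(e'_1)+q(e'_2)\bigr).
\]
Monotonicity in $t$, combined with the hypothesis $q(e')\leq q(e'_1)+q(e'_2)$, then gives
\[
F\bigl(s,q(e'_1)+q(e'_2)\bigr)\geq F\bigl(s,q(e')\bigr).
\]
Chaining these two inequalities shows the change is nonnegative, so insertion does not decrease the objective. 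Deletion is, by definition, the inverse operation: it sends $X^+$ back to $X$ with the intrinsic list reconstructed as $\mathcal E(X)$. The same identity, with its sign reversed, then shows deletion does not increase the objective.

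The only point needing care is the domain of $F$. The sum $q(e'_1)+q(e'_2)$ may exceed the range on which $F$ was originally given, for instance $[0,Q]$. I will therefore read the hypothesis as monotonicity and subadditivity of $F(s,\cdot)$ on an interval containing these sums; alternatively, I will note that subadditivity is only applied to arguments whose sum is at most the size of an arc bounded by $q(e')$'s range. This is the main, though minor, obstacle. The symmetry convention also needs a check: because $F$ is symmetric on unordered pairs, writing each pair with $e$-type edges first is harmless, and this is exactly the ordering the hypothesis on $t\mapsto F(s,t)$ refers to.
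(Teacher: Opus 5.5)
Your proof is correct and follows the paper's argument: only the replaced pair changes, and with $s=q(e)=q(e_1)=q(e_2)$ you chain monotonicity and subadditivity of $t\mapsto F(s,t)$ exactly as the paper does, with deletion handled as the reversed comparison. One minor slip in your count check: if $X$ has $m$ vertices, then $X^+$ has $m+1$ vertices and hence $m$ dual pairs, not $m$ vertices.
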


\begin{proof}
Formula \eqref{eq:dangling-change} follows because only the pair in
\eqref{eq:dangling-replacement} changes.  Put
$s=q(e)=q(e_1)=q(e_2)$.  Monotonicity and subadditivity give
\[
 F\bigl(s,q(e')\bigr)
 \leq F\bigl(s,q(e'_1)+q(e'_2)\bigr)
 \leq F\bigl(s,q(e'_1)\bigr)+F\bigl(s,q(e'_2)\bigr).
\]
The statement about deletion is the same comparison in reverse.
\end{proof}

Both $q=L$ and $q=\theta$, where
$\theta(a)=2\arcsin(L(a)/2)$, satisfy the triangle inequality required in the
proposition.  In the spherical case the three relevant endpoints lie on a
unit sphere centred at either endpoint of $e$, and the inequality is the
spherical triangle inequality.  Thus Euclidean products also do not decrease
under insertion.  For the spherical product, write $J_{\rm prod}^{\theta}$
for the corresponding intrinsic functional.  Then one has the especially
relevant identity
\begin{equation}\label{eq:dangling-spherical-product}
 J_{\rm prod}^{\theta}(X^+)-J_{\rm prod}^{\theta}(X)
 =\theta(e)\bigl(\theta(e'_1)+\theta(e'_2)-\theta(e')\bigr)
 \geq0.
\end{equation}
Thus adding dangling vertices cannot lower the sum of products of spherical
lengths of dual edges, while deleting them cannot raise it.  Repeatedly
subdividing $e'$ into finer exposed subarcs makes the sum of the spherical
endpoint distances converge to its circular arc length $\ell(e')$; hence the
subdivided contribution converges to $\theta(e)\ell(e')$.

There is no analogous unconditional rule for a general $F$.  For example, an
additively separated summand $F(s,t)=f(s)+f(t)$ also duplicates the term
$f(s)$ when a dangling point is inserted, and its sign cannot be inferred
without further assumptions on $f$.  The rectangle-area summand introduced
below is
\begin{equation}\label{eq:rectangle-area-summand}
 R(s,t)=4\arcsin\left(
 \frac{s}{\sqrt{4-s^2}}\frac{t}{\sqrt{4-t^2}}
 \right).
\end{equation}
It has the opposite behavior under subdivision from the products, but this
follows from the circular geometry rather than from the triangle inequality.

\begin{prop}[Rectangle area under dangling subdivision]
\label{prop:dangling-rectangle}
For the replacement \eqref{eq:dangling-replacement},
\[
 R(L(e),L(e'))\geq
 R(L(e),L(e'_1))+R(L(e),L(e'_2)).
\]
The inequality is strict when the inserted point lies in the relative
interior of $e'$.  Consequently, deleting a dangling vertex strictly
increases the sum of the areas of the linked spherical rectangles.
\end{prop}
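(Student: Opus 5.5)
The plan is to recognise $R(s,t)$ as the area of the spherical rectangle of the linked partition, and to show that inserting $x_0$ cuts that rectangle into the two smaller rectangles plus two spherical triangles of positive area. First I check that $R$ is this area. If a spherical rectangle on $\Bbb S^2$ has Euclidean side chords $s,t$ and spherical sides $a,b$, then $s=2\sin(a/2)$ gives $s/\sqrt{4-s^2}=\tan(a/2)$. The classical formula for the area of a spherical rectangle is $4\arcsin(\tan\frac a2\tan\frac b2)$. Hence $R(L(e),L(e'))$ is the area of the rectangle $R_{(e,e')}$ with corners $u_{ki},u_{kj},u_{lj},u_{li}$, where $u_{ab}=x_b-x_a$. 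By \eqref{eq:dangling-replacement}, only this pair changes. Therefore the last sentence of the statement (deletion strictly increases the total rectangle area) follows once the displayed inequality is proved with strictness.

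The geometric core is the following. Let $C$ be the circle of points at unit distance from both $x_i$ and $x_j$; it has radius $\rho=\sqrt{1-L(e)^2/4}<1$. The points $x_k,x_0,x_l$ lie in this order on the exposed arc $e'\subset C$. As $a$ runs along $e'$, the direction $u_{ai}=x_i-a$ runs along a small circle $\Gamma_i$ of $\Bbb S^2$: it makes a fixed angle with the axis $x_j-x_i$. The same holds for $u_{aj}$ on a small circle $\Gamma_j$. Both small circles have the same radius $\rho<1$, so they are not great circles. I will show three things.
\begin{enumerate}[label=(\roman*),noitemsep]
\item The arcs of $\Gamma_i$ and $\Gamma_j$ from $u_{k\cdot}$ to $u_{l\cdot}$ bulge into the big rectangle, that is, toward the side containing the opposite side $u_{kj}u_{lj}$. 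Consequently $u_{0i}$ and $u_{0j}$ lie in $R_{(e,e')}$.
\item The small rectangles $R_{(e_1,e_1')}$, with corners $u_{ki},u_{kj},u_{0j},u_{0i}$, and $R_{(e_2,e_2')}$, with corners $u_{0i},u_{0j},u_{lj},u_{li}$, share only the side $u_{0i}u_{0j}$.
\item The big rectangle is the union of these two small rectangles and the geodesic triangles $u_{ki}u_{0i}u_{li}$ and $u_{kj}u_{0j}u_{lj}$.
\end{enumerate}
Each triangle has positive area when $x_0$ lies in the relative interior of $e'$. Indeed, three distinct points of a circle that is not a great circle are never on one great circle. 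Additivity of area then gives the strict inequality. For (i), I would use the reflection symmetry exchanging $x_i$ and $x_j$, which maps $\Gamma_i$ to $-\Gamma_j$. I would then compute the sign of $\det(u_{ki},u_{li},u_{0i})$ relative to $\det(u_{ki},u_{li},u_{kj})$ in coordinates adapted to the axis of $C$.

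The main obstacle is making the convexity and containment in (i)--(iii) rigorous, and in particular the bulging direction. If the sign bookkeeping becomes messy, I will fall back on a direct analytic argument. Parametrise the sub-arc by its central angle $\varphi\in[0,\varphi_0]$, so that the chord is $t=2\rho\sin(\varphi/2)$, and set $g(\varphi)=R\bigl(L(e),2\rho\sin(\varphi/2)\bigr)$. This is an explicit function of the form $4\arcsin\bigl(\kappa\,\sin(\varphi/2)/\sqrt{1-\rho^2\sin^2(\varphi/2)}\bigr)$ with a constant $\kappa>0$. I will check that $g$ is strictly convex on the admissible range, where the spherical side is at most $\pi/3$. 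Strict convexity together with $g(0)=0$ gives strict superadditivity: $g(\varphi_1+\varphi_2)>g(\varphi_1)+g(\varphi_2)$ for $\varphi_1,\varphi_2>0$. This is exactly the claimed strict inequality.
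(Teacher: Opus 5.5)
Your proposal is correct. Your main route differs from the paper's, while your fallback is essentially the paper's proof.

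\emph{The paper's route.} It sets $c=L(e)/2$ and observes that your function is $h_s(\varphi)=4\arctan\bigl(c\tan(\varphi/2)\bigr)$, so your $\kappa$ is $c$. It then computes $h_s'(\varphi)=2c/\bigl(\cos^2(\varphi/2)+c^2\sin^2(\varphi/2)\bigr)$, which is increasing because $0<c\le 1/2$. Strict convexity and $h_s(0)=0$ give strict superadditivity. The dihedral-angle formula bounds the central angle by $2\arcsin(1/\sqrt3)<\pi$, so the argument applies on the relevant range.

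\emph{What your decomposition adds.} Your argument (i)--(iii) proves an exact identity. The defect $R(L(e),L(e'))-R(L(e),L(e'_1))-R(L(e),L(e'_2))$ equals twice the area of the spherical triangle $u_{ki}u_{0i}u_{li}$; its mirror image $u_{kj}u_{0j}u_{lj}$, under the reflection in the bisecting plane of $x_ix_j$, accounts for the other half. These two triangles are exactly the regions absorbed by $P_i^-$ and $P_j^-$ when $x_0$ becomes a diameter neighbour of $x_i$ and $x_j$. So your route exhibits the proposition as the same transfer of area from rectangular to polygonal cells that the paper later uses for \eqref{eq:critical-inclusion-rectangle}. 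The paper's analytic route is shorter and avoids all containment bookkeeping.

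\emph{Two details to fix in (i).} First, the reflection exchanging $x_i$ and $x_j$ maps $\Gamma_i$ onto $\Gamma_j$, not onto $-\Gamma_j$; it is $\Gamma_i$ itself that coincides with $-\Gamma_j$. Second, the bulging direction genuinely requires the central angle $2\alpha$ of $e'$ to be less than $\pi$. In coordinates with axis $x_j-x_i$, let $\beta$ be the angular position of $x_0$ measured from the midpoint of the arc, so $|\beta|<\alpha$. A normal to the great circle through $u_{ki},u_{li}$ then has inner product with $u_{0i}$ equal to a positive multiple of $\cos\beta-\cos\alpha$. Its inner product with $u_{kj}$ is a positive multiple of $\cos\alpha$. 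Hence you need the same minor-arc bound that the paper takes from the dihedral-angle formula.
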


\begin{proof}
Put $s=L(e)$ and let $\varphi$ be the central angle traced by $e'$ on the
circle $\partial B(x_i)\cap\partial B(x_j)$.  This circle has radius
$r=\sqrt{1-s^2/4}$, and hence
\[
 L(e')=2r\sin(\varphi/2).
\]
If $c=s/2$, substitution in the formula for $R$ gives
\[
 R\bigl(s,2r\sin(\varphi/2)\bigr)
 =4\arctan\bigl(c\tan(\varphi/2)\bigr)=:h_s(\varphi).
\]
Here $0\leq c\leq1/2$.  The dihedral-angle formula preceding
\eqref{eq:length-edge}, together with
$\theta(e),\theta(e')\leq\pi/3$, gives
$0\leq\varphi\leq2\arcsin(1/\sqrt3)<\pi$.  A direct differentiation gives
\[
 h_s'(\varphi)=
 \frac{2c}{\cos^2(\varphi/2)+c^2\sin^2(\varphi/2)},
\]
which is nondecreasing.  Thus $h_s$ is convex and $h_s(0)=0$, so
$h_s(u+v)\geq h_s(u)+h_s(v)$ for $u,v\geq0$ with $u+v<\pi$.
The two subarc angles add to $\varphi$, proving the claim.  For an exposed
dual pair $s=L(e)>0$.  If the inserted point is in the relative interior of
$e'$, both subarc angles are positive and $h_s$ is strictly convex on their
sum.  The inequality is then strict.
\end{proof}

\subsection{Product-type maximization problems}

The area formula for Meissner polyhedra \eqref{eq:area-Meiss-poly} involves
sums of terms depending on dual-edge pairs.  For a fixed labelled graph and a
fixed formal pair list, the relaxed problem on $\mathcal A(\mathcal D)$ has a
maximizer by Proposition \ref{prop:fixed-graph-compact}.  The geometric
problem is instead posed on $\mathcal A_{\rm geo}(\mathcal D,\mathcal E)$: a
boundary maximizer of the relaxation may contain coincident labels, and its
formal value is geometric only after reconstruction.

Let $\mathfrak X_{\rm ext}$ denote the class of finite extremal unit-diameter
sets, and put
\[
 \mathfrak R=
 \{(\mathcal D(X),\mathcal E(X)):X\in\mathfrak X_{\rm ext}\}.
\]
Thus abstract data $(\mathcal D,\mathcal E)$ are \emph{geometrically
realizable} precisely when $(\mathcal D,\mathcal E)\in\mathfrak R$, or,
equivalently, when
$\mathcal A_{\rm geo}(\mathcal D,\mathcal E)\ne\varnothing$.  The qualifier
is essential when the optimization is indexed by abstract graphs and pair
lists, since not every such list is induced by a configuration in
$\Bbb R^3$.  It is redundant when the outer supremum is written directly over
$X\in\mathfrak X_{\rm ext}$.  We use $\mathfrak R$ below to avoid repeating
the qualifier in every problem.

\subsubsection{Maximal products of lengths of dual edges.}

Fix a diameter graph $\mathcal D$ associated to an extremal set and let
$\mathcal E$ be its set of dual-edge pair indices.  Taking $F(s,t)=st$ in
\eqref{eq:general-functional} gives the problem
\begin{equation}
	 \mathcal P_1(\mathcal D,\mathcal E) = \sup_{X\in\mathcal A_{\rm geo}(\mathcal D,\mathcal E)} \sum_{(i,j,k,l)\in \mathcal E} |x_i-x_j|\cdot |x_k-x_l|
	 \label{eq:max-prod}
\end{equation}
This is the problem for a fixed realizable graph and dual-pair list.  Its
global value is
\[
 \mathfrak P_1:=
 \sup_{X\in\mathfrak X_{\rm ext}}
 \sum_{(e,e')\in\mathcal E(X)}L_X(e)L_X(e')
 =\sup_{(\mathcal D,\mathcal E)\in\mathfrak R}
 \mathcal P_1(\mathcal D,\mathcal E).
\]

First, let $X$ be extremal and set $K=\operatorname{conv}X$.  The facets of
$K+(-K)$ consist of a copy of each facet of $K$, its antipode, and an
antipodal pair of parallelograms $e-e'$ and $e'-e$ for every dual pair
$(e,e')$; see Figure~\ref{fig:flat-rectangles}.  To compute their areas,
write $e=[x_i,x_j]$ and $e'=[x_k,x_l]$.  The four cross-distances are one.
Subtracting the equalities
$|x_j-x_k|^2=|x_i-x_k|^2$ and
$|x_j-x_l|^2=|x_i-x_l|^2$ gives
$(x_j-x_i)\cdot(x_l-x_k)=0$.  Thus the parallelogram $e-e'$ has area
$L(e)L(e')$.  The same decomposition, with the natural subdivision of a
rectangle when a dangling vertex is present, gives
\[
 \Area(K+(-K))=2\Area(K)+
 2\sum_{(e,e')\in\mathcal E(X)}L(e)L(e').
\]
Consequently,
\[ \sum_{(i,j,k,l)\in \mathcal E} |x_i-x_j|\cdot |x_k-x_l| = \frac{1}{2}\left(\Area(K+(-K))-2\Area(K)\right) = A(K,-K),\]
where $A(K_1,K_2)$ is the mixed surface area in the normalization
\[
A(K_1+K_2)=A(K_1)+2A(K_1,K_2)+A(K_2);
\]
see \cite[Chapter~5]{schneider}.
Let $\mathcal{CW}_1$ be the class of convex bodies of constant width one and
write
\[
 \mathfrak B:=\sup_{W\in\mathcal{CW}_1}
 \bigl(2\pi-|\partial W|\bigr).
\]

\begin{prop}[Euclidean-product upper bound]
\label{prop:euclidean-product-upper}
One has
\begin{equation}\label{eq:P1-upper-bound}
 \mathfrak P_1\leq\mathfrak B.
\end{equation}
\end{prop}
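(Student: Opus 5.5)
The plan is to compare the Euclidean dual-edge products termwise with the Meissner area formula \eqref{eq:area-Meiss-poly}. Fix an extremal set $X\in\mathfrak X_{\rm ext}$ with dual pairs $(e_i,e_i')$, $i=1,\ldots,m-1$. Build one Meissner polyhedron $M=B(X\cup e_1\cup\ldots\cup e_{m-1})$ as in \eqref{eq:meiss-poly}. Any admissible choice within each dual pair will do, and we label each pair so that the term appearing in \eqref{eq:area-Meiss-poly} is $f(\theta(e_i),\theta(e_i'))$. Since $M$ has constant width one, $M\in\mathcal{CW}_1$. Then \eqref{eq:area-Meiss-poly} combined with \eqref{eq:formula-prod-sph-ball} gives
\[
 2\pi-|\partial M|=\sum_{i=1}^{m-1}2f\bigl(\theta(e_i),\theta(e_i')\bigr)
 =\sum_{i=1}^{m-1}\theta(e_i')\,\ell(e_i).
\]
Hence it suffices to prove the termwise bound $L(e_i)L(e_i')\leq \theta(e_i')\ell(e_i)$ for every dual pair.

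Both factors compare as chord versus arc. For the first, $L(e_i')=|x_k-x_l|=2\sin(\theta(e_i')/2)\leq\theta(e_i')$, because $2\sin(t/2)\leq t$. For the second, $\ell(e_i)$ is the length of the circular arc of $B(X)$ joining $x_i$ and $x_j$, by \eqref{eq:length-edge}, and $L(e_i)=|x_i-x_j|$ is the chord of that arc, so $L(e_i)\leq\ell(e_i)$. All four quantities are nonnegative, so multiplying the two inequalities gives the termwise bound. Summing over $i$ yields
\[
 \sum_{(e,e')\in\mathcal E(X)}L(e)L(e')\leq 2\pi-|\partial M|\leq\mathfrak B .
\]
Taking the supremum over $X\in\mathfrak X_{\rm ext}$ then gives \eqref{eq:P1-upper-bound}.

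The main point to verify is that the pairing and orientation in \eqref{eq:formula-prod-sph-ball} match the Euclidean product. Because $L(e)L(e')$ is symmetric, we only need, for each pair, one orientation in which the area formula reads $\theta(e_i')\ell(e_i)$. For that orientation the chord--arc comparison applies to $\ell(e_i)$, the arc with the same endpoints as $e_i$, and $\theta$ is paired with $e_i'$, as required.

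We also need the Meissner area formula to remain valid when $X$ has dangling vertices, where some pairs share a geometric edge through a void face. In that case the dangling incidences simply enter as separate pairs in \eqref{eq:area-Meiss-poly}, exactly as in the subdivision of Subsection~\ref{subsec:dangling-subdivision}, and the termwise argument is unchanged. Alternatively, we can delete the dangling vertices first. The left-hand side does not decrease under insertion (Proposition~\ref{prop:dangling-subdivision} with $q=L$), so deletion cannot increase it, and then $M$ is built from the reduced, non-dangling set.
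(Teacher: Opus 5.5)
Your proof is correct, but it takes a different route from the paper's.

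\textbf{The paper's route.} The paper does not use the Meissner area formula here. It identifies $\sum L(e)L(e')$ with the mixed area $A(K,-K)$, where $K=\operatorname{conv}X$. This comes from the facet decomposition of $K+(-K)$, whose extra facets are the rectangles $e-e'$. It then uses monotonicity of mixed area under the inclusion $K\subset K'$, where $K'$ is any constant-width-one completion. Since $K'+(-K')$ is the unit ball, $A(K',-K')=2\pi-|\partial K'|$, which gives the bound.

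\textbf{Your route.} Your argument is termwise. The inequality $L(e)L(e')\le\theta(e')\ell(e)$ follows from $2\sin(t/2)\le t$ and from
$\ell(e)=2r\arcsin\bigl(L(e)/(2r)\bigr)\ge L(e)$ with $r=\cos(\theta(e')/2)$. Summing this against \eqref{eq:area-Meiss-poly} and \eqref{eq:formula-prod-sph-ball} finishes the proof. In effect you prove $\mathfrak P_1\le\mathfrak P_2$ pointwise and then reuse the upper-bound half of Theorem~\ref{thm:spherical-product-reformulation}.

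\textbf{What each buys.} Your version is more elementary. It also makes the first inequality in \eqref{eq:P1-P2-comparison} immediate, rather than a consequence of the later theorem. The paper's version works with any constant-width completion, not just the Meissner one. It also exhibits the Euclidean sum as a mixed area, and that is exactly what frames the open equality (centre-hull density) question in the following remark.

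\textbf{Dangling vertices.} Your primary treatment relies on \eqref{eq:area-Meiss-poly} holding with the $m-1$ exposed pairs of a set that has dangling vertices. This is the same assumption the paper makes in Theorem~\ref{thm:spherical-product-reformulation}, so it is acceptable.

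\textbf{Correction.} Drop your ``alternatively'' reduction, because its inequality runs the wrong way. Insertion does not decrease $\sum L(e)L(e')$, so deleting dangling vertices gives $J(X_{\rm red})\le J(X)$. An upper bound for the reduced set therefore says nothing about $X$. Your primary treatment of the dangling pairs is what carries the proof.
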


\begin{proof}
Let $K'$ be a constant-width-one completion of
$K=\operatorname{conv}X$; for example, the Meissner construction
\eqref{eq:meiss-poly} supplies such a completion when $X$ is extremal.
Thus $K\subset K'$, and monotonicity of mixed surface area in both arguments
gives
\[
 \sum_{(e,e')\in\mathcal E(X)}L_X(e)L_X(e')
 =A(K,-K)\leq A(K',-K').
\]
Since $K'+(-K')$ is the unit ball, the normalization above gives
\[
 A(K',-K')=2\pi-|\partial K'|\leq\mathfrak B.
\]
Thus, among constant-width-one completions, the mixed surface area
$A(K',-K')$ is maximized by a surface-area minimizer.  Taking the supremum
over $X$ proves \eqref{eq:P1-upper-bound}.
\end{proof}

\begin{rem}[Open centre-hull density question]
It is not currently justified that equality holds in
\eqref{eq:P1-upper-bound}.  The completion argument above shows that every
finite extremal hull is bounded by the best constant-width completion; it
does not show that a surface-area-minimizing constant-width body is itself a
finite extremal hull, or is approximated in the required mixed-area sense by
such hulls.  Equality would follow, for example, if every
$W\in\mathcal{CW}_1$ admitted extremal sets $X_n$ with
$\operatorname{conv}X_n\to W$ in the Hausdorff metric; continuity of mixed
surface area would then give the reverse inequality.  The density theorem of
\cite{meissner_hynd} proves instead that Meissner bodies $M_n$ based on
extremal sets can converge to $W$.  It does not assert that the centre hulls
$\operatorname{conv}X_n$ converge to $W$.  Thus the Euclidean-product
equality, or an adequate weaker mixed-area approximation statement, remains
an open question.  This distinction is also consistent with the
three-dimensional relaxation in \cite{Bogosel_Mixed}.
\end{rem}
\begin{figure}
	\centering 
	\includegraphics[height=0.3\textwidth]{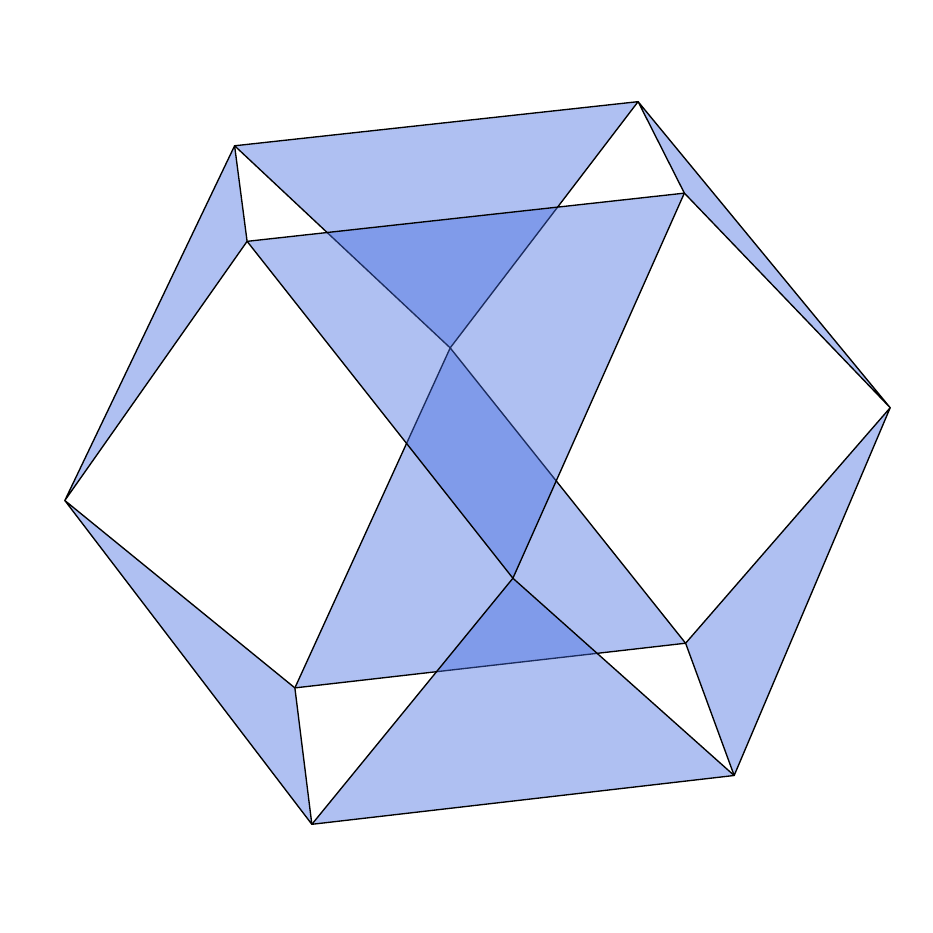}
	\includegraphics[height=0.3\textwidth]{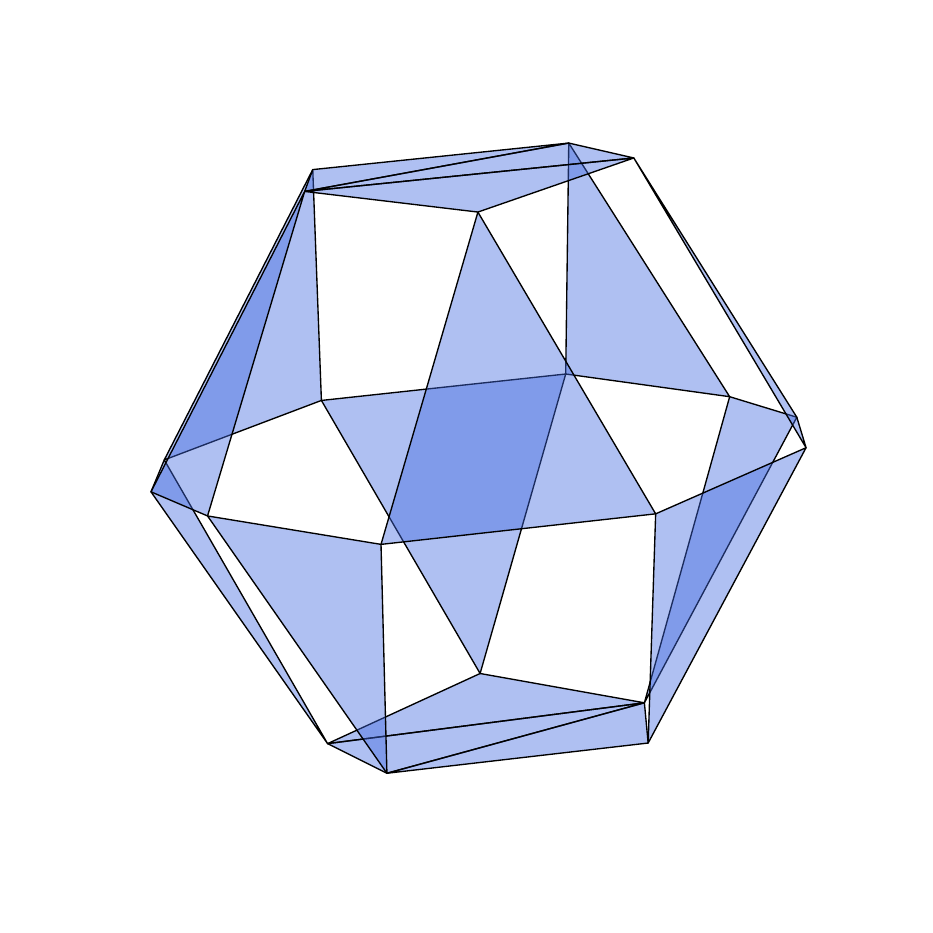}
	\includegraphics[height=0.3\textwidth]{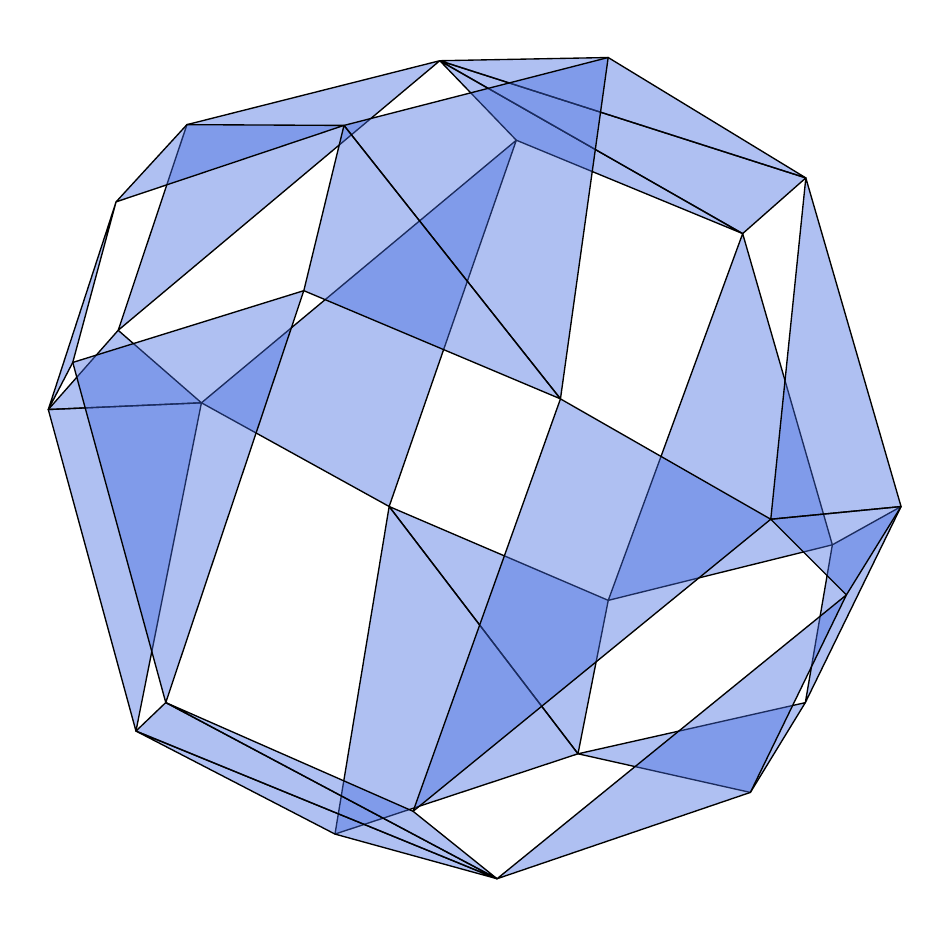}	
	\caption{Minkowski sums of extremal unit-diameter sets and their reflections through the origin. Rectangles associated with dual-edge pairs are visible.}
	\label{fig:flat-rectangles}
\end{figure}
\subsubsection{Maximal products of spherical lengths of dual edges}
\label{sec:max-prod-spherical}

The spherical length of a chord given by $x,y$ on the unit sphere is $2 \arcsin (|x-y|/2)$. Consider the problem
\begin{equation}
	\mathcal P_2(\mathcal D,\mathcal E) = \sup_{X\in\mathcal A_{\rm geo}(\mathcal D,\mathcal E)} \sum_{(i,j,k,l)\in \mathcal E} 4\arcsin \frac{|x_i-x_j|}{2} \cdot \arcsin \frac{|x_k-x_l|}{2}.
	\label{eq:max-spherical-prod}
\end{equation}
Its global value is
\[
 \mathfrak P_2:=
 \sup_{X\in\mathfrak X_{\rm ext}}
 \sum_{(e,e')\in\mathcal E(X)}\theta_X(e)\theta_X(e')
 =\sup_{(\mathcal D,\mathcal E)\in\mathfrak R}
 \mathcal P_2(\mathcal D,\mathcal E).
\]
Consider a dual pair $(x_i,x_j),(x_k,x_l)$, denoted $e,e'$. Since the geodesic edge is the shortest path on the unit sphere we have
\begin{equation}
	 \theta(e)\theta(e') \leq \theta(e)\ell(e'),
	 \label{eq:ineq-products}
\end{equation}
with $\ell$ defined in \eqref{eq:length-edge}.
Proposition~\ref{prop:dangling-subdivision} shows that repeatedly subdividing
$e'$ by dangling points does not decrease the spherical-product objective and
makes the sum of its spherical endpoint distances converge to $\ell(e')$.

\begin{thm}[Exact spherical-product reformulation]
\label{thm:spherical-product-reformulation}
One has
\[
 \mathfrak P_2=\mathfrak B.
\]
\end{thm}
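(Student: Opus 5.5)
The equality $\mathfrak P_2=\mathfrak B$ will follow from two inequalities. The upper bound $\mathfrak P_2\leq\mathfrak B$ comes from comparing each extremal set with one of its Meissner polyhedra. The lower bound $\mathfrak P_2\geq\mathfrak B$ comes from approximating an almost optimal constant-width body by Meissner polyhedra and then refining the underlying extremal sets with dangling vertices.

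\emph{Upper bound.} Fix $X\in\mathfrak X_{\rm ext}$ with dual pairs $(e_i,e_i')$, $i=1,\ldots,m-1$.
\begin{enumerate}[label=(\roman*),noitemsep]
 \item Orient every pair by declaring $e_i$ to be the edge whose arc length $\ell$ appears. The Meissner construction \eqref{eq:meiss-poly} lets us choose either edge of each pair independently, so each orientation corresponds to one of the $2^{m-1}$ Meissner polyhedra $M$ built from $X$.
 \item For the chosen orientation, \eqref{eq:area-Meiss-poly} and \eqref{eq:formula-prod-sph-ball} give
\[
 |\partial M|=2\pi-\sum_{i=1}^{m-1}\theta(e_i')\,\ell(e_i).
\]
 \item Inequality \eqref{eq:ineq-products}, read with the roles of the two edges exchanged, gives $\theta(e_i)\theta(e_i')\leq\theta(e_i')\ell(e_i)$. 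It holds because the geodesic is the shortest path on the unit sphere centred at an endpoint of $e_i'$.
 \item Summing, $\sum_i\theta(e_i)\theta(e_i')\leq 2\pi-|\partial M|\leq\mathfrak B$, since $M\in\mathcal{CW}_1$.
\end{enumerate}
Taking the supremum over $X$ gives $\mathfrak P_2\leq\mathfrak B$. Dangling vertices cause no difficulty here: they only produce void rectangles, and \eqref{eq:area-Meiss-poly} is still indexed by the $m-1$ dual pairs.

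\emph{Lower bound.} Fix $\varepsilon>0$ and pick $W\in\mathcal{CW}_1$ with $2\pi-|\partial W|>\mathfrak B-\varepsilon$.
\begin{enumerate}[label=(\roman*),noitemsep]
 \item By the density theorem of \cite{meissner_hynd}, there are Meissner polyhedra $M_n$ based on extremal sets with $M_n\to W$ in the Hausdorff metric. Surface area is continuous under Hausdorff convergence of convex bodies, so some Meissner polyhedron $M=M(X)$ satisfies $2\pi-|\partial M|>\mathfrak B-2\varepsilon$.
 \item Write $|\partial M|=2\pi-\sum_i\theta(e_i')\ell(e_i)$ for the orientation that defines $M$.
 \item Using the construction of Subsection~\ref{subsec:dangling-subdivision}, subdivide each arc $e_i$ by $N$ dangling points placed in its relative interior. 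Each insertion keeps the set extremal and replaces one pair by two pairs in which $e_i'$ is duplicated. The pairs with different indices $i$ are disjoint, so the subdivisions of different $e_i$ do not interact.
 \item After the subdivision, the contribution of pair $i$ becomes $\theta(e_i')\sum_k\theta(e_{i,k})$. As the mesh tends to zero this converges to $\theta(e_i')\ell(e_i)$, as noted after \eqref{eq:dangling-spherical-product}.
 \item Hence the refined extremal sets $X_N$ satisfy $\sum\theta\theta\to 2\pi-|\partial M|$, and so $\mathfrak P_2\geq\mathfrak B-2\varepsilon$.
\end{enumerate}

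\emph{Main obstacle.} The delicate step is (iii)--(iv) of the lower bound: the iterated insertions must really produce exposed dual pairs of the kind \eqref{eq:dangling-replacement}. Concretely, after a point $x_0$ is inserted on $e_i$, the subarcs $\arc{x_kx_0}$ and $\arc{x_0x_l}$ must remain exposed edges of $B(X^+)$ whose dual is the same geometric edge $e_i'$. Then one can insert again inside a subarc, so that the intrinsic pair list is the predicted one at every stage. I would prove this by induction on the number of inserted points. The inductive step checks that $B(X^+)$ differs from $B(X)$ only near $e_i$: the new ball centred at $x_0$ passes through $x_i,x_j$ and contains the rest of $B(X)$, and it cuts the spindle face along the two subarcs. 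Everything else, including the approximation step (i), is taken from cited results.
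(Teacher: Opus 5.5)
Your proof is correct and is essentially the paper's: the upper bound compares $X$ with one of its Meissner smoothings via \eqref{eq:ineq-products} and \eqref{eq:area-Meiss-poly}, and the lower bound combines density of Meissner polyhedra, continuity of surface area, and fine dangling subdivision of the selected (sharp) arcs. The paper takes the exposure issue you flag from the description in Subsection~\ref{subsec:dangling-subdivision}, and it gets extremality more directly, since all inserted points lie in the Meissner centre set and the V\'azsonyi bound then applies. For your induction, note that $B(x_0)$ changes $B(X)$ near the dual edge $e_i'$, not near $e_i$. It replaces $e_i'$ by the two boundary arcs of the digonal face $F_{x_0}$, which share the endpoints of $e_i'$ and hence its spherical length $\theta(e_i')$, and that is all you need.
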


\begin{proof}
For the upper bound, orient each dual pair so that the corresponding choice
of Meissner smoothing contributes $\theta(e)\ell(e')$ to
$2\pi-|\partial M|$ in \eqref{eq:area-Meiss-poly}.  Summing
\eqref{eq:ineq-products} gives the pointwise estimate
$J_{\rm prod}^{\theta}(X)\leq2\pi-|\partial M|\leq\mathfrak B$; taking
suprema gives $\mathfrak P_2\leq\mathfrak B$.

For the reverse inequality, let $\varepsilon>0$.  Choose a body $W$ of
constant width one such that
$2\pi-|\partial W|>\mathfrak B-\varepsilon/3$.  By the density of Meissner
polyhedra \cite{meissner_hynd} and continuity of surface area under Hausdorff
convergence, there is a Meissner polyhedron $M$ with
\[
 2\pi-|\partial M|>\mathfrak B-2\varepsilon/3.
\]
Subdivide each selected arc $e'$ of $M$ by finitely many dangling points,
with all subdivisions sufficiently fine that the resulting
spherical-product sum is larger than
$2\pi-|\partial M|-\varepsilon/3$.  The enlarged set has diameter at most
one by the Meissner construction, and every new centre contributes the two
diameters joining it to the endpoints of the dual edge $e$.  The V\'azsonyi
bound therefore makes it extremal.  Hence
$\mathfrak P_2>\mathfrak B-\varepsilon$.  Since $\varepsilon$ is arbitrary,
$\mathfrak P_2\geq\mathfrak B$, which proves the equality.
\end{proof}

In the notation above, therefore,
\begin{equation}\label{eq:P1-P2-comparison}
 \mathfrak P_1\leq\mathfrak P_2=\mathfrak B.
\end{equation}
The spherical-product problem is consequently an exact discrete supremal
reformulation of the constant-width area-deficit problem.  Whether the first
inequality in \eqref{eq:P1-P2-comparison} is an equality is the Euclidean
mixed-area approximation question isolated after Proposition
\ref{prop:euclidean-product-upper}; centre-hull density would be a sufficient
answer.

\subsubsection{Maximal areas of linked rectangle partitions on the sphere}
The function $R$ defined in \eqref{eq:rectangle-area-summand} is the area of the spherical rectangle
determined by a Euclidean rectangle with side lengths $s,t$ and vertices on
the unit sphere; see \cite[Lemma~1]{bogosel_Meissner}.  A natural question is
to consider the problem
\begin{equation}
	\mathcal P_3(\mathcal D,\mathcal E) = \sup_{X\in\mathcal A_{\rm geo}(\mathcal D,\mathcal E)} \sum_{(i,j,k,l)\in \mathcal E} R(|x_i-x_j|,|x_k-x_l|).
	\label{eq:max-rectangles}
\end{equation}
For an extremal set $X$, write
\begin{equation}\label{eq:rectangle-objective}
 J_{\rm rect}(X)=
 \sum_{(e,e')\in\mathcal E(X)}R(L_X(e),L_X(e')).
\end{equation}
Its global value is
\[
 \mathfrak P_3:=
 \sup_{X\in\mathfrak X_{\rm ext}}J_{\rm rect}(X)
 =\sup_{(\mathcal D,\mathcal E)\in\mathfrak R}
 \mathcal P_3(\mathcal D,\mathcal E).
\]

\medskip
\noindent\emph{Extremal-inclusion comparison.}
If $Y\subsetneq X$ are critical extremal sets, then
\begin{equation}\label{eq:critical-inclusion-rectangle}
 J_{\rm rect}(X)<J_{\rm rect}(Y).
\end{equation}
Indeed, the points of $X\setminus Y$ lie between the wedges and the
corresponding spindles determined by $Y$.  More precisely, write
$P_i^\pm(Z)$ for the two polygonal cells associated with $x_i$ in the linked
partition of an extremal set $Z$.  For $x_i\in Y$, every diameter neighbour
of $x_i$ in $Y$ is also a diameter neighbour in $X$, and therefore
\[
 P_i^+(Y)\subset P_i^+(X),\qquad
 P_i^-(Y)\subset P_i^-(X).
\]
Thus the union of the polygonal cells for $Y$ is contained in the union of
those for $X$, so the union of the rectangular cells for $X$ is contained in
the union of those for $Y$.  The inclusion is strict in area: if
$x\in X\setminus Y$, criticality of $X$ makes $P_x^+$ and $P_x^-$
two-dimensional, and their interiors lie in the rectangular region of the
$Y$-partition.  The total area of the rectangular region is twice
$J_{\rm rect}$.  Hence $2J_{\rm rect}(X)<2J_{\rm rect}(Y)$, proving
\eqref{eq:critical-inclusion-rectangle}.

We can now state the stronger consequence.  By Proposition
\ref{prop:dangling-rectangle}, deleting a dangling vertex strictly increases
$J_{\rm rect}$.  Hence a maximizer, if one exists, is critical.  If such a
critical maximizer $X$ contained a proper extremal subset, delete the
dangling vertices of that subset until a critical extremal set
$Y\subsetneq X$ remains.  The deletions increase the rectangle objective,
and \eqref{eq:critical-inclusion-rectangle} gives
$J_{\rm rect}(Y)>J_{\rm rect}(X)$, a contradiction.  Therefore every
maximizer contains no proper extremal subset.  The same reduction applied to
an arbitrary extremal set also gives
\[
 \mathfrak P_3=
 \sup_{\substack{X\in\mathfrak X_{\rm ext}\\
                  X\text{ contains no proper extremal subset}}}
 J_{\rm rect}(X).
\]
This equality concerns suprema and does not assert that the unrestricted
global problem has a maximizer.

\section{Numerical simulations and open problems}
\label{sec:numerics}

\subsection{Database and direct-evaluation conjectures}

This section reports direct evaluations and numerical optimization results.
Since the computations are not certified, terms such as maximizer, minimizer,
and numerical optimizer refer to candidates found by the numerical methods.

We use all 1002 embeddings with at most 14 vertices from the database
associated with \cite{meissner_graphs}:
\begin{center}
 \href{https://github.com/mraggi/ReuleauxPolyhedra}{\nolinkurl{https://github.com/mraggi/ReuleauxPolyhedra}}.
\end{center}
For $m=15,16$ we use, in the same native graph and embedding formats, all
2,195 strong-involution configurations with 15 vertices and all 7,447 with
16 vertices.  Thus the combined direct-evaluation database has 10,644
records.  The repository for the $m=15,16$ data is
\begin{center}
 \href{https://github.com/beniamin-bogosel/ReuleauxPolyhedraData15-16}%
 {\nolinkurl{https://github.com/beniamin-bogosel/ReuleauxPolyhedraData15-16}}.
\end{center}
It contains the self-dual planar-code inputs, the strong-involution outputs,
validated embeddings, checksums, and reproduction scripts.  In contrast to
the original differential-evolution penalty, these embeddings were computed
by sparse gradient-based constrained optimization with an explicit clearance
variable that repels distinct vertices from coincidence.  Unit edges are
equality constraints, nonedges remain strictly below the unit diameter, and
the serialized output is accepted only after the diameter graph has been
reconstructed independently.  This distinction removes the arbitrary
positive lower threshold on nonedge lengths while still excluding merged
vertices.

The graph and validated-embedding records from this repository supply the
fixed constraints and the first initialization for the optimizations in
Subsection~4.2.

Code and scripts for reproducing the numerical results in this section are
available in the following GitHub repository:

\begin{center}
	\href{https://github.com/beniamin-bogosel/ExtremalDiameterGraphs3D}%
	{\nolinkurl{https://github.com/beniamin-bogosel/ExtremalDiameterGraphs3D}}.
\end{center}

For each supplied embedding $X$ we verify the $2m-2$ unit distances,
reconstruct the $m-1$ exposed dual pairs, and directly evaluate
\[
 S_F(X)=\sum_{(e,e')\in\mathcal E(X)}F(L(e),L(e')).
\]
No vertices are moved and no optimization is performed in this first
experiment.  The purpose is to identify inequalities suggested by the
database and, in particular, the objectives for which the regular tetrahedron
$K_4$ appears at one end of the observed range.

The Python and MATLAB implementations contain partly overlapping lists of
objectives.  In Table~\ref{tab:direct-objectives} we combine the two lists and
use the abbreviations
\[
 \Theta(t)=2\arcsin(t/2),\qquad
 \rho(t)=\frac{t}{\sqrt{4-t^2}},
\]
\[
 F_M(s,t)=4\arcsin(t/2)\sqrt{1-t^2/4}
 \arcsin\left(\frac{s/2}{\sqrt{1-t^2/4}}\right),
\]
\[
 F_M^*(s,t)=\max\{F_M(s,t),F_M(t,s)\},
\]
and
\[
 R(s,t)=4\arcsin\bigl(\rho(s)\rho(t)\bigr).
\]
The role in the second column refers to the proposed inequality over all
finite extremal sets, whereas the last column separates theorems from
conjectures based only on the direct database evaluation.

\begin{table}[H]
\centering
\footnotesize
\renewcommand{\arraystretch}{1.16}
\begin{tabular}{>{\raggedright\arraybackslash}p{0.43\textwidth}|
                >{\raggedright\arraybackslash}p{0.14\textwidth}|
                >{\raggedright\arraybackslash}p{0.32\textwidth}}
$F(s,t)$ & Role of $K_4$ & Status \\ \hline
$s+t$ & minimizer & proved; Theorem~\ref{thm:minimal-length}(b) \\
$\sqrt s+\sqrt t$ & minimizer & proved; Theorem~\ref{thm:minimal-length}(b) \\
$s^{-1}+t^{-1}$ & minimizer & proved by $s,t\leq1$ and $m-1\geq3$ \\
$st$ & minimizer & conjectured; database minimum \\
$s^2+t^2$ & minimizer & conjectured; database minimum \\
$\min\{s,t\}$ & minimizer & conjectured; database minimum \\
$\min\{\Theta(s),\Theta(t)\}$ & minimizer & conjectured; database minimum \\
$\sqrt{st}$ & minimizer & conjectured; database minimum \\
$\arcsin s+\arcsin t$ & minimizer & conjectured; database minimum \\
$\tan(s/2)\tan(t/2)$ & maximizer & conjectured; database maximum \\
$\rho(s)\rho(t)$ & maximizer & conjectured; database maximum \\
$F_M^*(s,t)$ & maximizer & Meissner/Blaschke--Lebesgue conjecture; database maximum \\
$R(s,t)$ & maximizer & conjectured; database maximum \\
$\Theta(s)\Theta(t)$ & neither & values occur on both sides of the $K_4$ value
\end{tabular}
\caption{The role of the regular tetrahedron for the objectives currently
implemented in Python or MATLAB.  ``Database minimum/maximum'' means among
the 10,644 supplied embeddings, evaluated directly without optimization.}
\label{tab:direct-objectives}
\end{table}

None of the 9,642 direct evaluations at $m=15,16$ changes the role recorded
in Table~\ref{tab:direct-objectives}.  In particular, every database
minimum/maximum assertion in the table holds on the combined database.
This is an exhaustive statement about the supplied embeddings, not about all
geometrically realizable configurations on the corresponding graphs.

For the first proved row, write $L=2\sin(\theta/2)$ in
Theorem~\ref{thm:minimal-length}(b).  The same theorem applies to
$\sqrt L$ because $\theta\mapsto\sqrt{2\sin(\theta/2)}$ is concave on
$[0,\pi/3]$.  The reciprocal row follows directly: every one of the $m-1$
dual pairs contributes at least two, so the sum is at least six, with equality
for $K_4$.

The spherical product deserves separate emphasis.  Directly on the combined
database, the tetrahedral value and the observed range are, respectively,
\[
 3.2898681,\qquad [3.1988242,3.2993669].
\]
For the 2,195 records at $m=15$ the range is
$[3.2120829,3.2930989]$, and for the 7,447 records at $m=16$ it is
$[3.1988242,3.2975253]$.  The database maximum remains the $m=14$ record
\texttt{n14\_g0483}; no record at $m=15,16$ exceeds it.
Thus the regular tetrahedron itself is neither a minimizer nor a maximizer of this
functional in the database.  The
optimization conjecture below instead concerns a tetrahedral core decorated
by dangling vertices.

The Meissner summand $F_M$ is asymmetric and therefore is not intrinsically
defined on an unordered dual pair.  The orientation-independent row in
Table~\ref{tab:direct-objectives} uses $F_M^*$, which chooses the better of the
two Meissner smoothings pair by pair.

The Python registry is evaluated by \path{EvaluatorAll.py}; the MATLAB-only
rows are evaluated from the same reconstructed chord list using the formulas
in \path{DiameterGraphsFmincon/ObjectiveFunctions.m}.  The CSV and JSON
outputs retain the identifier, objective value, embedding residual, and number
of reconstructed dual pairs for every database entry.

\subsection{Gradient optimization on a fixed graph}

\subsubsection{Optimization method and treatment of collisions}

We next keep a labelled diameter graph $\mathcal D$ and its initial dual-pair
list $\mathcal E$ fixed and optimize the vertex coordinates over the
constraints in \eqref{eq:fixed-feasible-set}.  These are local,
gradient-based nonlinear programs: the Python implementation uses IPOPT, and
the MATLAB implementation uses \texttt{fmincon} with the active-set or
interior-point algorithm.  Analytic first derivatives are supplied for the
objectives and constraints.  The first run for each graph starts from its
database embedding.  Selective restarts use independently prescribed
perturbations of an embedding or of a previous endpoint.  A separate pilot
on thirteen cases also used independent Gaussian point clouds as starting
configurations.  Every completed output is therefore only a possible local
optimum for that fixed graph, not a
proof of its global optimum.

Our main optimized objective is the spherical product
\begin{equation}\label{eq:numerical-product}
 J^{\rm lab}_{\theta,\mathcal E}(X)
 =\sum_{(e,e')\in\mathcal E}\theta(e)\theta(e')
\end{equation}
and we maximize this labelled objective separately on each fixed graph.  For
configurations in $\mathcal A_{\rm geo}(\mathcal D,\mathcal E)$ it agrees with the intrinsic functional
$J_{\rm prod}^{\theta}$ from \eqref{eq:intrinsic-dual-functional}.  For the
1002 graphs with at most 14 vertices, complete first passes were made with
both the MATLAB active-set implementation and, independently, the Python
IPOPT implementation.  The consolidated results below use the MATLAB
computation, which was also used for all graphs at $m=15,16$ and for the
selective restarts.

Collisions require special care.  After each solve we merge connected
components of close vertices, reconstruct all unit-distance pairs from the
coordinates, iteratively remove vertices incident to at most one diameter,
and reconstruct exposed dual arcs on the appropriate intersection circles.
A result with $m_0$ surviving vertices is retained only when it satisfies the
diameter bound, has $2m_0-2$ diameter edges, and has $m_0-1$ exposed dual
pairs.  Finally, the intrinsic value $J_{\rm prod}^{\theta}$ is recomputed from
this reconstructed list.  Thus a solver success flag alone never certifies a
data point, and a value computed with the original labelled pair list is not
used after vertices have merged.

The corrected optimizer uses $10^{-10}$ for IPOPT convergence and $10^{-6}$
for reconstruction.  The same reconstruction tolerance is used in the
MATLAB verification.  The MATLAB results are exported once in a compact
format; the objective and all dual pairs are then recomputed independently in
Python.  The number of displayed decimal places is useful for reproducing the
computations, but does not provide certified error bounds.

\subsubsection{Consolidated spherical-product results}

We call a diameter graph an \emph{iterated tetrahedral decoration} if one can
successively delete vertices of current diameter degree two and obtain
$K_4$.  Likewise, an \emph{iterated pyramidal decoration} reduces to an odd
wheel, namely an odd rim cycle together with a hub adjacent to every rim
vertex.  The recursive definition is necessary: a later decoration can raise
the degree of an earlier dangling vertex from two to three.

The input size $m$ and the number of vertices remaining after reconstruction
need not agree.  We therefore group the results by the size of the supplied
diameter graph.  If the endpoint of the first optimization did not pass the
geometric checks, we repeated the computation from an independently
perturbed embedding.  This was necessary for 6 graphs at $m=13$, 21 at
$m=14$, 59 at $m=15$, and 328 at $m=16$; no such repetition was needed at
the smaller sizes.  Every one of these 414 graphs eventually produced a
verified endpoint.  Since every supplied graph already has a valid database
embedding, a failed first endpoint is a failure of that local optimization
and reconstruction, not an inadmissible graph.

We also repeated the optimization whenever a verified first endpoint did not
contain $K_4$.  There were 34 such cases at $m=15$ and 147 at $m=16$, and
none at the smaller sizes.  For every one of these 181 graphs, a different
initialization produced a verified endpoint containing $K_4$ with a larger
corrected objective than the best non-$K_4$ endpoint found for that graph.
This includes the endpoints which initially had an odd-wheel pyramid.

The objective and constraints were unchanged in these restarts and contained
no term favouring $K_4$; only the decision to restart was triggered by the
absence of $K_4$ from the first verified endpoint.

Table~\ref{tab:product-final-coverage} reports one selected verified endpoint
for every supplied graph. Numerical optimization starts from the embeddings in
the datasets described above for $m\leq16$. Most first endpoints contain
$K_4$. When an endpoint either failed the geometric checks or did not contain
$K_4$, the computation was repeated from a randomized initialization. In every
such case, a verified endpoint containing $K_4$ was obtained. In most cases a
stronger property holds: the diameter graph of the selected endpoint reduces
to $K_4$ after recursive deletion of dangling vertices.
\begin{table}[H]
\centering
\small
\begin{tabular}{c|r|r|r|r|r}
input $m$ & graphs & repeated after first check & contains $K_4$
& reduces to $K_4$ & largest $J_{\rm prod}^{\theta}$\\ \hline
4  & 1    & 0   & 1    & 1    & 3.289868134\\
5  & 0    & 0   & 0    & 0    & --\\
6  & 1    & 0   & 1    & 1    & 3.305074174\\
7  & 1    & 0   & 1    & 1    & 3.320280215\\
8  & 2    & 0   & 2    & 2    & 3.320280212\\
9  & 4    & 0   & 4    & 4    & 3.325130555\\
10 & 11   & 0   & 11   & 11   & 3.337911428\\
11 & 24   & 0   & 24   & 23   & 3.340336592\\
12 & 72   & 0   & 72   & 70   & 3.342761720\\
13 & 212  & 6   & 212  & 198  & 3.343535165\\
14 & 674  & 21  & 674  & 642  & 3.344308574\\
15 & 2195 & 59  & 2195 & 2105 & 3.345238368\\
16 & 7447 & 328 & 7447 & 7016 & 3.345719553\\ \hline
all & 10644 & 414 & 10644 & 10074 & 3.345719553
\end{tabular}
\caption{Selected verified endpoints for the fixed-graph maximization.  The
third column counts first-pass endpoints which had to be repeated after
geometric reconstruction.  The last column is grouped by the input size,
not by the number of surviving vertices.  These are local-search results,
not certified fixed-graph maxima.}
\label{tab:product-final-coverage}
\end{table}

The largest value in the table is obtained from \path{n16_g7145}; its
reconstructed configuration has 14 vertices and is an iterated tetrahedral
decoration.  The largest $m=15$ value comes from \path{n15_g0783}, and the
largest value among inputs through $m=14$ comes from \path{n14_g0001}; these
are also iterated tetrahedral decorations.  In total, 10,074 selected
endpoints reduce to $K_4$, while the remaining 570 contain $K_4$ without
being reducible to it by the degree-two deletion procedure.

The complete run histories, including the successive initializations and
the reconstruction decisions, are kept outside the article in the
machine-readable computation record.  The table can be regenerated by
\path{BuildFinalSphericalProductReport.py}.  The important numerical
conclusion is uniform $K_4$ containment among the selected endpoints; the
computations neither certify the largest local value nor give a global upper
bound for any fixed graph. Examples of numerical results can be seen in Figure \ref{fig:results-sph-product}

\begin{figure}
	\centering
	\includegraphics[width=0.99\textwidth]{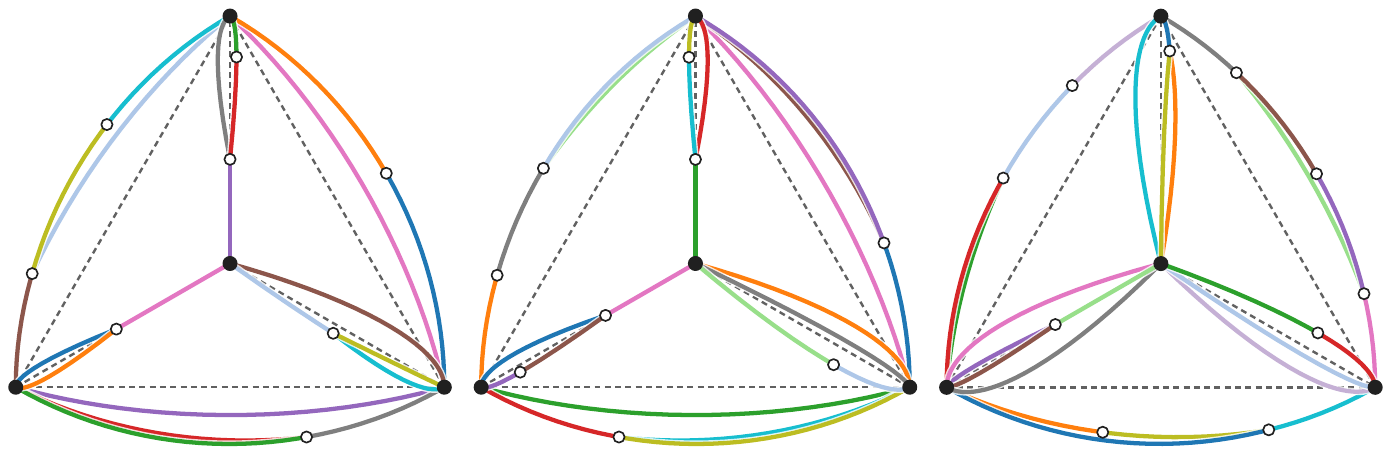}
	\caption{Three examples of numerical optimizers for \eqref{eq:numerical-product}, for diameter graphs with $m=14,15,16$ points. Each displayed configuration contains the regular tetrahedron $K_4$, viewed along a tetrahedral altitude so that one face appears as an equilateral triangle and the fourth vertex projects to its centre. The diameters realizing this regular tetrahedron are drawn with dashed lines. Arcs corresponding to dual edges in the ball polyhedron are drawn with the same color.}
	\label{fig:results-sph-product}
\end{figure}

\subsubsection{Structural conjectures}

The global version suggested by the largest values is the following.

\begin{conj}[$K_4$ containment]\label{conj:product-decoration}
For each $m\geq4$, the supremum of $J_{\rm prod}^{\theta}$ over extremal
sets with at most $m$ vertices equals its supremum over extremal sets whose
diameter graph contains $K_4$.
\end{conj}

This is the statement directly supported by the computations.  For every one
of the 10,644 supplied fixed graphs, the largest corrected value retained from
the starts used for that graph is attained by a verified configuration
containing $K_4$.  In particular, all 181 verified first endpoints without
$K_4$ were improved by a $K_4$-containing restart.  The computations do not
bound the values of unobserved local or global maxima.

A stronger conjecture would require an iterated decoration of $K_4$ rather
than mere containment.  In Table~\ref{tab:product-final-coverage}, 10,074
selected endpoints reduce to $K_4$, whereas 570 contain $K_4$ without such a
reduction.  Thus containment is the uniform numerical conclusion.

The statement for each fixed graph is strictly stronger.

\begin{conj}[$K_4$ containment for a fixed graph]
\label{conj:graphwise-product-decoration}
Let $(\mathcal D,\mathcal E)$ arise from an extremal unit-diameter set.  There
is a maximizing sequence $X^n\in
\mathcal A_{\rm geo}(\mathcal D,\mathcal E)$ for the intrinsic functional
$J_{\rm prod}^{\theta}$ which converges, after rigid motions and passage to a
subsequence, to a finite set $Z$ such that
$\operatorname{core}(Z)$ contains $K_4$ and
\[
 J_{\rm prod}^{\theta}(\operatorname{core}(Z))
 =\sup_{X\in\mathcal A_{\rm geo}(\mathcal D,\mathcal E)}
   J_{\rm prod}^{\theta}(X).
\]
\end{conj}

Together with the exact reformulation \eqref{eq:P1-P2-comparison},
Conjecture~\ref{conj:graphwise-product-decoration} would imply that an
area-minimizing body of constant width one can be chosen to contain a unit
regular tetrahedron.  Indeed, choose extremal sets $X_n$ with
$J_{\rm prod}^{\theta}(X_n)\to\mathfrak B$.  Apply the conjecture to the
fixed graph of each $X_n$ to obtain extremal cores $Y_n$ containing a regular
tetrahedron and satisfying
$J_{\rm prod}^{\theta}(Y_n)\geq J_{\rm prod}^{\theta}(X_n)$.  Choose for
each $Y_n$ a Meissner smoothing $W_n$ as in the proof of
Theorem~\ref{thm:spherical-product-reformulation}.  Then
\[
 2\pi-|\partial W_n|\geq J_{\rm prod}^{\theta}(Y_n)\longrightarrow
 \mathfrak B.
\]
After aligning the tetrahedra by rigid motions, one has
$T\subset W_n\subset B(T)$.  Blaschke selection and continuity of surface
area under Hausdorff convergence give a constant-width limit $W$ with
\[
 T\subset W\subset B(T),\qquad 2\pi-|\partial W|=\mathfrak B.
\]
Thus $W$ minimizes surface area among bodies of constant width one.  Here
$T$ is the regular tetrahedron and $B(T)$ is the corresponding Reuleaux
tetrahedron.  This does not imply that $W$ is a Meissner tetrahedron, since
other constant-width completions of $T$ exist, but it substantially restricts
the class in which an area minimizer must be sought.

Here passage to a limit and reconstruction are essential: the limiting
configuration may have coincident vertices and need not retain the original
labelled graph.
Because $(\mathcal D,\mathcal E)$ is assumed to arise from an extremal set, no
separate ``geometrically realizable'' hypothesis is needed.  The computations
above support but do not prove
Conjecture~\ref{conj:graphwise-product-decoration}.  They compare finitely many
local solutions and provide no certified upper bound for a fixed graph.  They
also show that a proof cannot identify an arbitrary gradient-solver limit
with the desired $K_4$-containing maximizer.

The principal verification and summary programs are:
\begin{itemize}[noitemsep]
\item\path{AnalyzeProductMaximizers.py}
\item\path{AnalyzeFminconResults.py}
\item\path{BuildFinalSphericalProductReport.py}.
\end{itemize}

\section{Conclusions}
\label{sec:conclusions}

Dual-edge functionals provide a common language for geometric inequalities,
mixed surface area, and discrete models of the three-dimensional Blaschke--Lebesgue problem.
The geometric-limit result identifies an extremal core after collisions,
while the linked-partition construction reduces its proposed converse to a
global metric realization question.  Compatible convergence of exposed dual
pairs gives lower semicontinuity of nonnegative intrinsic functionals, while
stable convergence and vanishing on the coordinate axes give continuity; the
general case with coincident vertices remains open.  The additive estimate
in Theorem~\ref{thm:minimal-length} has a regular tetrahedron as a minimizer.  For
products, the spherical functional gives the exact constant-width
area-deficit supremum, whereas its Euclidean counterpart presently gives an
upper bound whose reverse direction is the open centre-hull density question.
The spherical-product functional also leads to a concrete structural
conjecture tested by gradient-based computations on every supplied graph
through 16 vertices.

The combined database supplies exhaustive direct evaluations of 10,644
records and at least one fixed-graph computation starting from every supplied
embedding.  Every endpoint is checked after coincident vertices are merged.
The 414 first endpoints which failed this check were recomputed from different
initializations, and every graph then produced a verified endpoint.  Likewise,
all 181 verified first endpoints without $K_4$ were improved by endpoints
containing $K_4$.  The consolidated report therefore contains a verified
$K_4$-containing endpoint for every supplied graph.  Of these, 10,074 reduce
to $K_4$ by successive degree-two deletions and 570 satisfy containment only.
This supports $K_4$ containment, but the computations are local and the
global maximization problem remains open.

The computations also expose a general methodological point: combinatorial
dual data cannot be transported blindly through a collision.  The
next steps are to prove a reduction principle for iterated dangling
decorations, to prove the proposed $K_4$-containment principle and determine
when it can be strengthened to tetrahedral reduction, to resolve exposure
stability and the centre-hull density question, to settle linked-partition
realization, and to replace local nonlinear optimization by certified upper
bounds on each fixed graph. Proving the conjectures above would significantly
advance the study of the three-dimensional Blaschke--Lebesgue problem.

{\bf Acknowledgments.} The author was partially supported by the French ANR project STOIQUES.

{\bf Competing interests:} The author declares no competing interests.

{\bf Data availability statement:} The data through 14 vertices are from the
repository associated with \cite{meissner_graphs}.  The data for
$m\in\{15,16\}$, together with their generation and validation files, are
available at
\begin{center}\small
\href{https://github.com/beniamin-bogosel/ReuleauxPolyhedraData15-16}%
{\nolinkurl{https://github.com/beniamin-bogosel/ReuleauxPolyhedraData15-16}}.
\end{center}

{\bf Code availability:} The code for reproducing the simulations in Section \ref{sec:numerics} is available at
\begin{center}\small
	\href{https://github.com/beniamin-bogosel/ExtremalDiameterGraphs3D}%
	{\nolinkurl{https://github.com/beniamin-bogosel/ExtremalDiameterGraphs3D}}.
\end{center}

{\bf AI usage statement:} OpenAI Codex was used to orchestrate and automate the numerous optimization tasks in Section \ref{sec:numerics} and to process the results. The author has verified the code and numerical simulations and takes full responsibility for the results presented in this paper.

\bibliographystyle{abbrv}
\bibliography{./biblio}

\end{document}